\documentclass[a4paper,12pt]{article}

\usepackage[a4paper, top=3cm, left=2.5cm, right=2.5cm, bottom=3cm]{geometry}

\usepackage{amsmath, amsthm, pb-diagram}
\usepackage{amssymb}
\usepackage{amsfonts}
\usepackage{enumerate, fancyhdr, dsfont}



    
    \newcommand{\thzfc}{\mathrm{ZFC}}

    \newcommand{\Awf}{\mathcal{A}}

    \newcommand{\Dwf}{\mathcal{D}}

    \newcommand{\Mwf}{\mathcal{M}}

    \newcommand{\Uwf}{\mathcal{U}}

    \newcommand{\afrak}{\mathfrak{a}}
    \newcommand{\bfrak}{\mathfrak{b}}
    \newcommand{\cfrak}{\mathfrak{c}}
    \newcommand{\dfrak}{\mathfrak{d}}

    \newcommand{\rfrak}{\mathfrak{r}}
    \newcommand{\sfrak}{\mathfrak{s}}

    \newcommand{\menos}{\smallsetminus}

    \newcommand{\frestr}{\!\!\upharpoonright\!\!}

    \newcommand{\cov}{\mbox{\rm cov}}
    \newcommand{\non}{\mbox{\rm non}}
    
    \newcommand{\limdir}{\mbox{\rm limdir}}


    \newcommand{\Cor}{\mathds{C}}
    \newcommand{\Dor}{\mathds{D}}
    \newcommand{\Eor}{\mathds{E}}

    \newcommand{\Por}{\mathds{P}}
    \newcommand{\Qor}{\mathds{Q}}
    \newcommand{\Ror}{\mathds{R}}
    \newcommand{\Sor}{\mathds{S}}
    
    \newcommand{\Qnm}{\dot{\mathds{Q}}}


    \newcommand{\cf}{\mbox{\rm cf}}


    \newcommand{\sii}{{\ \mbox{$\Leftrightarrow$} \ }}



\title{Preservation properties for iterations with finite support}
\author{Diego A. Mej\'ia\thanks{Supported by the Monbukagakusho (Ministry of Education, Culture, Sports, Science and Technology) Scholarship, Japan.}
}

\date{\small Graduate School of System Informatics\\ Kobe University\\ Kobe, Japan.\\ \ \\ \texttt{damejiag@kurt.scitec.kobe-u.ac.jp}}

\begin{document}

\makeatletter
\def\@roman#1{\romannumeral #1}
\makeatother

\theoremstyle{plain}
  \newtheorem{theorem}{Theorem}[section]
  \newtheorem{corollary}[theorem]{Corollary}
  \newtheorem{lemma}[theorem]{Lemma}
  \newtheorem{prop}[theorem]{Proposition}
  \newtheorem{clm}[theorem]{Claim}
  \newtheorem{exer}[theorem]{Exercise}
  \newtheorem{question}[theorem]{Question}
\theoremstyle{definition}
  \newtheorem{definition}[theorem]{Definition}
  \newtheorem{example}[theorem]{Example}
  \newtheorem{remark}[theorem]{Remark}
  \newtheorem{context}[theorem]{Context}
  \newtheorem*{acknowledgements}{Acknowledgements}

\maketitle

\begin{abstract}
   We present the classical theory of preservation of $\sqsubset$-unbounded families in generic extensions by ccc posets, where $\sqsubset$ is a definable relation of certain type on spaces of real numbers, typically associated with some classical cardinal invariant. We also prove that, under some conditions, these preservation properties can be preserved in direct limits of an iteration, so applications are extended beyond the context of finite support iterations. Also, we make a breve exposition of Shelah's theory of forcing with an ultrapower of a poset by a measurable cardinal.
\end{abstract}

\section{Introduction}\label{SecIntro}

In this paper, we discuss two topics of technical nature that can be applied to forcing iterations with finite support. The first is about preservation properties of $\sqsubset$-unbounded families in forcing extensions, where $\sqsubset$ is a definable relation on spaces of real numbers as explained in Context \ref{ContextPres}. These type of preservation properties are introduced in \cite{JuSh-KunenMillerchart} and \cite{Br-Cichon}, later generalized and summarized in \cite[Sect. 6.4 and 6.5]{BarJu} and \cite{Go-Tools}. As these properties can be preserved under finite support iterations (fsi) of posets with the countable chain condition (ccc), the main application has been in the context of cardinal invariants, where the preservation property is used to preserve some cardinal invariant small while, with the reals added through the iteration, some other cardinal invariant becomes larger in the final generic extension.

The second topic is about forcing with an ultrapower of a poset by a measurable cardinal $\kappa$, originally introduced by Shelah \cite{Sh-TempIt} to show that, given a ccc poset $\Por$, the ultrapower of $\Por$ destroys all the maximal almost disjoint (mad) families of size $\geq\kappa$ that exist in the $\Por$-extension (see Corollary \ref{UltrapowDestrMAD}). This was used by Shelah to produce a ccc forcing notion that forces $\kappa<\dfrak<\afrak$ where $\dfrak$ is the \emph{dominating number} (see Example \ref{ExBdRelation}(1)) and $\afrak$ is the least size of an infinite mad family. Also, Shelah modified the construction of the model to get the consistency of $\aleph_1<\dfrak<\afrak$ without the use of a measurable cardinal.

This paper does not contain original results by the author and only contains technical results. The main purpose is to explain the two topics mentioned above under the point of view of the author, this as a prelude of the main results in \cite{Me-TempIt}. Some known facts about these topics that are not proven (and not even explicitly stated) in any other article or book are presented in this article, for instance:

\begin{itemize}
  \item The preservation property of Definition \ref{DefGood} is preserved in direct limits under some conditions (Theorem \ref{PresGoodLimdir}). This allows to preserve this property under some iterations of stronger type than fsi, e.g., \emph{template iterations}\footnote{This iteration technique was created by Shelah \cite{Sh-TempIt}. See also \cite{Br-TempIt}, \cite{Br-Luminy} and \cite{Me-TempIt} for further discussions.} (\cite[Sect. 4]{Me-TempIt}).
  \item Preservation of $\sqsubset$-unbounded reals (see Definition \ref{DefPresUnbreal}) under parallel direct limits (Theorem \ref{PresUnbrealLimit}). This fact simplifies the proof of the author's result stating that, in a certain type of template iteration, a real added at some stage of the iteration cannot be added at any other stage \cite[Thm. 4.16 and 4.18]{Me-TempIt}.
  \item A characterization about forcing a projective statement of real numbers with the ultrapower of a ccc poset (Theorem \ref{ProjStatandUltrapow}). This fact implies directly Shelah's result discussed above about destroying mad families with ultrapowers.
\end{itemize}

This article is structured in three parts. In Section \ref{SecCorr}, we explain correctness and direct limits, elementary facts about forcing that are essential for the construction of iterations with finite supports (e.g. template iterations). Section \ref{SecPresProp} is devoted to the topic of preservation properties on iterations with finite supports and, in Section \ref{SecUltrapow}, we discuss forcing with ultrapowers.

\begin{acknowledgements}
   The author is deeply grateful with professor J. Brendle for all his help and guidance, in particular, with the topics of preservation properties and template iterations that the author learnt directly from him.

   The author is also thankful with professor S. Fuchino for his invitation to the RIMS 2013 conference.
\end{acknowledgements}

\section{Correctness and direct limits}\label{SecCorr}

The concept of correctness is originally developed for complete Boolean algebras \cite{Br-Amalg,Br-Shat,Br-Luminy}, but notions and results can be translated in terms of posets in general. In this section, we present correctness for posets.

Usually, if $\Por$ and $\Qor$ are posets, $\Por\lessdot\Qor$ denotes that $\Por$ is completely embedded into $\Qor$. For this article, we reserve the notation $\Por\lessdot\Qor$ to say that $\Por$ is a complete suborder of $\Qor$. Also, if $M$ is a transitive model of (a quite large finite fragment of) $\thzfc$ and $\Por\in M$, $\Por\lessdot_M\Qor$ denotes that $\Por\subseteq\Qor$ and that any maximal antichain of $\Por$ in $M$ is also a maximal antichain of $\Qor$.

For this section, fix $M\subseteq N$ transitive models of $\thzfc$. Note that, if $\Por\in M$ and $\Qor\in N$ are posets, $\Por\lessdot_M \Qor$ implies that, whenever $G$ is $\Qor$-generic over $N$, $\Por\cap G$ is $\Por$-generic over $M$ and $M[\Por\cap G]\subseteq N[G]$.

Recall the four-element lattice $I_4:=\{\wedge,0,1,\vee\}$ where $\vee$ is the largest element, $\wedge$ is the least element and $0,1$ are in between.

\begin{definition}[Correct system of embeddings]\label{DefCorr}
   Let $\Por_i$ be a poset for each $i\in I_4$ and assume that $\Por_i\lessdot\Por_j$ for $i<j$ in $I_4$. We say that the system $\langle\Por_\wedge,\Por_0,\Por_1,\Por_\vee\rangle$ is \emph{correct} if, for each $p\in\Por_0$ and $q\in\Por_1$, if both have compatible reductions in $\Por_\wedge$, then $p$ and $q$ are compatible in $\Por_\vee$. An equivalent statement is that, for each $p\in\Por_0$ and for every reduction $r\in\Por_\wedge$ of $p$, $r$ is a reduction of $p$ with respect to $\Por_1,\Por_\vee$.

   There is a restrictive version of this notion. For the model $M$, if $\Por_\wedge,\Por_0\in M$, $\Por_\wedge\lessdot\Por_0$, $\Por_\wedge\lessdot_M\Por_1$, $\Por_0\lessdot_M\Por_\vee$ and $\Por_1\lessdot\Por_\vee$, say that the system $\langle\Por_\wedge,\Por_0,\Por_1,\Por_\vee\rangle$ is \emph{correct with respect to $M$} if, for any $p\in\Por_\wedge$ and $q\in\Por_0$, if $p$ is a reduction of $q$, then $p$ is a reduction of $q$ with respect to $\Por_1,\Por_\vee$.
\end{definition}

The results of this section are applications of this notion to two-step iterations, quotients and direct limits of posets.

\begin{lemma}\label{2stepitemb}
   Let $\Por\in M$, $\Por'\in N$ posets such that $\Por\lessdot_M\Por'$. If $\Qnm\in M$ is a $\Por$-name of a poset, $\Qnm'\in N$ a $\Por'$-name of a poset
   and $\Por'$ forces (with respect to $N$) that $\Qnm\lessdot_{M^\Por}\Qnm'$, then $\Por\ast\Qnm\lessdot_M\Por'\ast\Qnm'$. Also, $\langle\Por,\Por\ast\Qnm,\Por',\Por'\ast\Qnm'\rangle$ is a correct system with respect to $M$.
\end{lemma}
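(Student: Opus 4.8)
The plan is to handle both assertions through the two-step semantics of forcing, reducing everything to the behaviour of the induced generic filters. First I would fix the canonical identification of names: since $\Por\lessdot_M\Por'$ gives $\Por\subseteq\Por'$, every $\Por$-name reinterprets as a $\Por'$-name in such a way that, whenever $G'$ is $\Por'$-generic over $N$ and $G:=\Por\cap G'$ (which is $\Por$-generic over $M$ by the remark preceding Definition \ref{DefCorr}), the reinterpreted name evaluates under $G'$ to the original value under $G$. Under this identification the map $(p,\dot q)\mapsto(p,\dot q)$ sends $\Por\ast\Qnm$ into $\Por'\ast\Qnm'$; that it preserves order and the top element uses only that $\leq_{\Qnm}$ is the restriction of $\leq_{\Qnm'}$, which is forced by $\Qnm\lessdot_{M^\Por}\Qnm'$. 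So for the first assertion it remains to show that every maximal antichain $A\in M$ of $\Por\ast\Qnm$ stays a maximal antichain of $\Por'\ast\Qnm'$.

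For this I would exploit that forcing with $\Por\ast\Qnm$ over $M$ factors as forcing with $\Por$ followed by $\Qnm$. Given a $\Por'\ast\Qnm'$-generic filter $G'\ast H'$ over $N$, set $G:=\Por\cap G'$ and $H:=\Qnm[G]\cap H'$. By the remark preceding Definition \ref{DefCorr}, $G$ is $\Por$-generic over $M$; and since $\Por'$ forces $\Qnm\lessdot_{M^\Por}\Qnm'$, the same remark applied inside $N[G']$ (with the models $M[G]\subseteq N[G']$ and the posets $\Qnm[G]\lessdot_{M[G]}\Qnm'[G']$) shows $H$ is $\Qnm[G]$-generic over $M[G]$. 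Hence $G\ast H$ is $\Por\ast\Qnm$-generic over $M$, so it meets $A$: some $(p,\dot q)\in A$ has $p\in G$ and $\dot q[G]\in H$. Because $p\in G\subseteq G'$ and, by the choice of identification, the value of $\dot q$ under $G'$ equals $\dot q[G]\in H\subseteq H'$, the condition $(p,\dot q)$ lies in $G'\ast H'$. Thus $A$ meets every $\Por'\ast\Qnm'$-generic filter over $N$, which gives predensity; and an analogous extraction shows that if two members of $A$ had a common extension in $\Por'\ast\Qnm'$ they would both enter some such $G\ast H$, contradicting that they are incompatible in $\Por\ast\Qnm$. Hence $A$ remains a maximal antichain and $\Por\ast\Qnm\lessdot_M\Por'\ast\Qnm'$.

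For correctness with respect to $M$, I would first record that the structural hypotheses of Definition \ref{DefCorr} hold with $\Por_\wedge=\Por$, $\Por_0=\Por\ast\Qnm$, $\Por_1=\Por'$, $\Por_\vee=\Por'\ast\Qnm'$: the embeddings $\Por\lessdot\Por\ast\Qnm$ and $\Por'\lessdot\Por'\ast\Qnm'$ are the standard ones, $\Por\lessdot_M\Por'$ is given, and $\Por\ast\Qnm\lessdot_M\Por'\ast\Qnm'$ is the first assertion. Next I would identify the relevant reductions. A direct computation shows that, for $s\in\Por'$, the condition $(s,\dot 1)$ is compatible with $(p_0,\dot q_0)\in\Por\ast\Qnm$ in $\Por'\ast\Qnm'$ if and only if $s$ is compatible with $p_0$ in $\Por'$ (the witness is obtained by keeping the second coordinate $\dot q_0$). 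Consequently $p\in\Por$ is a reduction of $(p_0,\dot q_0)$ with respect to $\Por\lessdot\Por\ast\Qnm$ exactly when every $\Por$-extension of $p$ is compatible with $p_0$ in $\Por$, and $p$ is a reduction of $(p_0,\dot q_0)$ with respect to $\Por',\Por'\ast\Qnm'$ exactly when every $\Por'$-extension of $p$ is compatible with $p_0$ in $\Por'$. So correctness reduces to the transfer claim: if every $\Por$-extension of $p$ meets $p_0$, then so does every $\Por'$-extension.

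Finally I would prove this transfer claim by genericity. The hypothesis says $\{r\in\Por:r\leq p\text{ and }r\leq p_0\}$ is dense below $p$ in $\Por$, and this set lies in $M$. If some $s\leq p$ in $\Por'$ were incompatible with $p_0$, choose a $\Por'$-generic $G'$ over $N$ with $s\in G'$ and put $G:=\Por\cap G'$; then $p\in G$, so $G$ meets the dense set above and $p_0\in G\subseteq G'$, contradicting the incompatibility of $s$ and $p_0$. This yields correctness with respect to $M$. The step I expect to be the main obstacle is the bookkeeping for the inner generic $H=\Qnm[G]\cap H'$: one must invoke the forced relation $\Qnm\lessdot_{M^\Por}\Qnm'$ at exactly the right model ($N[G']$, relative to $M[G]$) and match it with the name identification, so that membership in $A$ genuinely transports from $G\ast H$ back up to $G'\ast H'$. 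All remaining verifications are routine.
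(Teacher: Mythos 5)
Your proof is correct, and it takes a genuinely different route from the paper's. The paper argues syntactically, without ever passing to generic filters: incompatibility preservation comes from a maximal antichain in $M$ of conditions of $\Por$ deciding compatibility with the first coordinates $p_0,p_1$, and preservation of a maximal antichain $\{(p_\alpha,\dot q_\alpha)\ /\ \alpha<\delta\}\in M$ comes from first showing that $\Por$ forces $\{\dot q_\alpha\ /\ p_\alpha\in\dot G,\ \alpha<\delta\}$ to be a maximal antichain of $\Qnm$, then invoking the hypothesis $\Vdash_{\Por'}\Qnm\lessdot_{M^\Por}\Qnm'$ to conclude it stays maximal in $\Qnm'$, and finally pulling the resulting compatibility back to $\Por'\ast\Qnm'$. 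You instead factor an arbitrary $\Por'\ast\Qnm'$-generic $G'\ast H'$ over $N$ into $G=\Por\cap G'$ and $H=\Qnm[G]\cap H'$, and check---by two applications of the remark preceding Definition~\ref{DefCorr}, once over $N$ and once inside $N[G']$ with the models $M[G]\subseteq N[G']$---that $G\ast H$ is $\Por\ast\Qnm$-generic over $M$; both predensity and antichain preservation then fall out. Your route is more modular, but it tacitly requires generic filters over $N$ to exist, so strictly speaking it establishes the lemma in the countable-transitive-model reading; this is consistent with the paper's own conventions (it argues with generics over $N$ in Theorems~\ref{PresGoodLimdir} and~\ref{PresUnbrealLimit}), but the paper's combinatorial argument is model-free and applies verbatim to class-sized $M,N$. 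A genuine advantage of your write-up is that you actually prove the correctness assertion---via the observation that being a reduction of $(p_0,\dot q_0)$, whether into $\Por$ or into $\Por'$, depends only on the first coordinate $p_0$, followed by the transfer of ``every $\Por$-extension of $p$ is compatible with $p_0$'' from $\Por$ to $\Por'$---whereas the paper's proof stops after the complete embedding and leaves correctness unargued. If you want to purge generic filters from that last transfer step as well, note that it follows syntactically by extending $\{r\in\Por\ /\ r\leq p\textrm{ and }r\leq p_0\}\cup\{r\in\Por\ /\ r\perp p\}$ to a maximal antichain of $\Por$ in $M$ and applying $\Por\lessdot_M\Por'$ together with the fact that $\lessdot_M$ preserves incompatibility.
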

\begin{proof}
  First prove that, if $(p_0,\dot{q}_0),(p_1,\dot{q}_1)\in\Por\ast\Qnm$ are compatible in $\Por'\ast\Qnm'$, then they are also compatible in $\Por\ast\Qnm$. Let $(p',\dot{q}')\in\Por'\ast\Qnm'$ be a common extension. Find $A\in M$ a maximal antichain in $\Por$ contained in $\{p\in\Por\ /\ p\leq p_0,p_1\textrm{\ or\ }p_0\perp p\textrm{\ or\ }p_1\perp p\}$. As $A$ is also maximal antichain in $\Por'$, there exists a $p_2\in A$ compatible with $p'$. $p_2$ is a common extension of $p_0,p_1$ because $p'$ is a common extension of $p_0,p_1$. Also, $p_2$ cannot force, with respect to $\Por$ and $M$, that $\dot{q}_0\perp\dot{q}_1$ because $p'$ forces their compatibility with respect to $\Por'$ and $N$. Therefore, there exists $p\leq p_2$ that forces $\dot{q}_0,\dot{q}_1$ compatible.

  Now, let $\{(p_\alpha,q_\alpha)\ /\ \alpha<\delta\}\in M$ a maximal antichain in $\Por\ast\Qnm$. We claim first that $\Por$ forces that $\{q_\alpha\ /\ p_\alpha\in\dot{G},\alpha<\delta\}$ is a maximal antichain in $\Qnm$, where $\dot{G}$ is a $\Por$-name of its generic subset. Indeed, let $p\in\Por$ be arbitrary and $\dot{q}$ be a $\Por$-name for a condition in $\Qnm$, For some $\alpha<\delta$, there exists a common extension $(r,\dot{s})$ of $(p,\dot{q}),(p_\alpha,\dot{q}_\alpha)$, so $r$ forces that $p_\alpha\in\dot{G}$ and that $\dot{q}_\alpha,\dot{q}$ are compatible.

  Let $(p',\dot{q}')\in\Por'\ast\Qnm'$. Clearly, $p'$ forces (with respect to $\Qor,N$) that $\{\dot{q}_\alpha\ /\ p_\alpha\in\dot{H},\alpha<\delta\}$ is a maximal antichain in $\Qnm'$, where $\dot{H}$ is the $\Por'$-name of its generic subset. Hence, there are $\alpha<\delta$ and $p''\leq p'$ in $\Por'$ that forces $p_\alpha\in\dot{H}$ and $\dot{q}'$ compatible with $\dot{q}_\alpha$. Therefore, $(p',\dot{q}')$ is compatible with $(p_\alpha,\dot{q}_\alpha)$.
\end{proof}

If $\Por$ and $\Qor$ are posets and $\Por\lessdot\Qor$, recall that the quotient $\Qor/\Por$ is defined as a $\Por$-name of the poset $\{q\in\Qor\ /\ \exists_{p\in\dot{G}}(p\textrm{\ is a reduction of }q)\}$ with the order inherited from $\Qor$. It is known that $\Qor\simeq\Por\ast(\Qor/\Por)$.

\begin{lemma}\label{QuotEmb}
   Let $\langle\Por,\Qor,\Por',\Qor'\rangle$ be a correct system. Then, $\Por'$ forces that $\Qor/\Por\lessdot_{V^\Por}\Qor'/\Por'$.
\end{lemma}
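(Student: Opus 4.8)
The plan is to fix a $\Por'$-generic filter $H$ over $V$ and set $G:=H\cap\Por$. Since $\Por\lessdot\Por'$ (this is the relation $\Por_\wedge\lessdot\Por_1$ built into the correct system), $G$ is $\Por$-generic over $V$ and $V[G]\subseteq V[H]$. Working inside $V[H]$, write $Q:=(\Qor/\Por)_G$ and $Q':=(\Qor'/\Por')_H$, whose orders are inherited from $\Qor$ and $\Qor'$ respectively. Unravelling the definition of $\lessdot_{V^\Por}$, it suffices to prove, in $V[H]$, that $Q\subseteq Q'$ and that every maximal antichain of $Q$ belonging to $V[G]$ remains a maximal antichain of $Q'$; that is, $Q\lessdot_{V[G]}Q'$.

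The inclusion $Q\subseteq Q'$ is immediate from correctness. Given $q\in Q$, choose $p\in G$ that is a reduction of $q$ with respect to $\Por,\Qor$. Since $G\subseteq H$ we have $p\in H$, and correctness of $\langle\Por,\Qor,\Por',\Qor'\rangle$ says precisely that such a $p$ is also a reduction of $q$ with respect to $\Por',\Qor'$. Hence $p\in H$ witnesses $q\in\Qor'/H=Q'$.

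The real content is the preservation of maximal antichains, and I would obtain it by proving the stronger assertion that every $q'\in Q'$ has a reduction lying in $Q$, together with the fact that incompatibility in $Q$ is inherited by $Q'$. The engine for both is a single construction. Fix $q'\in Q'$ and pick $p'\in H$ reducing $q'$ with respect to $\Por',\Qor'$. By genericity of $H$ over $\Por$, the filter $G=H\cap\Por$ cannot contain anything incompatible with $p'$, so it must meet the predense set of $\Por$-reductions of $p'$; fix $p_0\in G$ reducing $p'$ with respect to $\Por,\Por'$. Since reductions compose along $\lessdot$, $p_0$ is a reduction of $q'$ with respect to $\Por,\Qor'$. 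On the other hand, using $\Qor\lessdot\Qor'$ I take a reduction $s\in\Qor$ of $q'$ with respect to $\Qor,\Qor'$; a short computation (every $\Qor$-extension of $s$ is $\Qor'$-compatible with $q'$, and $\Qor\lessdot\Qor'$ reflects compatibility) shows $s\leq a,b$ in $\Qor$ whenever $q'\leq a,b$ in $\Qor'$ with $a,b\in\Qor$. Correctness is then used to align the two descending routes $\Por\to\Qor\to\Qor'$ and $\Por\to\Por'\to\Qor'$ of $q'$, so that $s$ may be chosen to admit a reduction with respect to $\Por,\Qor$ that lies in $G$; this places $s$ in $\Qor/G=Q$. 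Such an $s$ is the desired reduction of $q'$ into $Q$, and, for incompatibility, if $a,b\in Q$ admitted a common lower bound $q'\in Q'$ then the corresponding $s\in Q$ would be a common lower bound of $a,b$ inside $Q$, contradicting their incompatibility there. Combining predensity of reductions with inherited incompatibility yields that a maximal antichain of $Q$ stays a maximal antichain of $Q'$.

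I expect the main obstacle to be the final alignment step: certifying that the $\Qor$-reduction $s$ of $q'$ genuinely lands in the quotient $Q$. This is subtle because, with the reduction-based definition, $\Qor/\Por$ is not downward closed in $\Qor$, so membership in $Q$ cannot be inherited from a larger condition and must be witnessed by an explicit reduction inside $G$. Correctness is exactly the hypothesis that synchronizes the $\Por$- and $\Por'$-computed reductions of a condition of $\Qor'$, and this is where it is indispensable; the remaining ingredients, namely the composition of reductions and the genericity of $H$ over $\Por$, are routine bookkeeping.
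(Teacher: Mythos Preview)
Your setup and the verification of $Q\subseteq Q'$ are fine, and the overall plan (produce reductions from $Q'$ into $Q$ and show incompatibility is reflected) is a legitimate route. But two steps do not go through as written.

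First, the parenthetical justification for ``$s\leq a,b$ in $\Qor$ whenever $q'\leq a,b$'' only shows that every $\Qor$-extension of $s$ is $\Qor$-compatible with $a$ (and with $b$). That yields $s\leq a$ only if $\Qor$ is separative, which is not assumed here. What you can legitimately conclude is that a reduction of $q'$ can be \emph{chosen} below $a$ and $b$: take any reduction $s_0$, note it is $\Qor$-compatible with $a$, pass to a common extension, and observe this extension is still a reduction of $q'$; iterate for $b$. The paper does exactly this kind of strengthening when it produces $q\in\Qor$ below $q_0,q_1,p_0$.

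Second, and this is the real gap, the ``alignment step'' that you yourself flag as the main obstacle is the entire content of the lemma, and your sketch does not carry it out. Saying ``correctness is then used to align the two descending routes \ldots\ so that $s$ may be chosen to admit a reduction in $G$'' is a restatement of what has to be proved. Concretely, you must run a density argument in $\Por'$ below the given $p'$: the paper descends from $p'$ to a $\Por$-reduction $p_0$ of $p'$, extends $q'$ in $\Qor'$ to some $q''\leq q',p_0$, reduces $q''$ to $q\in\Qor$ chosen below $q_0,q_1,p_0$, reduces $q$ to $p\in\Por$ with $p\leq p_0$, and finally uses that $p$ (being $\leq p_0$, a reduction of $p'$) is $\Por'$-compatible with $p'$ to get $p''\leq p,p'$ forcing $q\in\Qor/\Por$. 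This chain of four reductions, with the careful bookkeeping that each new object lies below the previously fixed $\Qor$- or $\Por$-elements, is precisely the missing ``synchronization''. Without it you have no mechanism placing your $s$ in $Q$, and since $\Qor/\Por$ is not downward closed in $\Qor$ (as you correctly observe), nothing else will do the job.
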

\begin{proof}
   Correctness implies directly that $\Vdash_{\Por'}\Qor/\Por\subseteq\Qor'/\Por'$. We prove first that $\Por'$ forces that any pair of incompatible conditions in $\Qor/\Por$ are incompatible in $\Qor'/\Por'$. Let $p'\in\Por'$, $q_0,q_1\in\Qor$ and $q'\in\Qor'$ be such that $p'\Vdash_{\Por'}``q_0,q_1\in\Qor/\Por\textrm{,\ }q'\in\Qor'/\Por'\textrm{\ and\ }q'\leq q_0,q_1"$. We need to find a $p''\leq p'$ in $\Por'$ which forces that $q_0$ and $q_1$ are compatible in $\Qor/\Por$. As $p'\Vdash_{\Por'} q'\in\Qor'/\Por'$, $p'$ is a reduction of $q'$. Find $p\in\Por$ and $q\in\Qor$ such that $q\leq q_0,q_1$, $p$ is a reduction of $q$, $p$ is a reduction of $p'$ and $q$ is a reduction of $q'$. Indeed, choose $p_0\in\Por$ a reduction of $p'$. Then, as $p_0$ is also a reduction of $q'$, there exists a $q''\in\Qor'$ such that $q''\leq q',p_0$. Then, we can find $q\in\Qor$ a reduction of $q''$ such that $q\leq q_0,q_1,p_0$. Now, find $p\leq p_0$ in $\Por$ such that it is a reduction of $q$. Clearly, $p$ and $q$ are as desired. Now, $p\Vdash_\Por q\in\Qor/\Por$ and, as it is a reduction of $p'$, find $p''\in\Por'$ such that $p''\leq p,p'$. Thus, $p''\Vdash_{\Por'}``q\in\Qor/\Por"$ and $q\leq q_0,q_1$.

   Now, let $\dot{A}$ be a $\Por$-name for a maximal antichain in $\Qor/\Por$. Given $p'\in\Por'$ and $q'\in\Qor'$ such that $p'\Vdash_{\Por'}q'\in\Qor'/\Por'$, we need to find $p''\leq p'$ in $\Por'$ and $q\in\Qor$ such that $p''$ forces that $q\in\dot{A}$ and that it is compatible with $q'$ in $\Qor'/\Por'$. Clearly, $p'$ is a reduction of $q'$, so there exists $q''\in\Qor'$ that extends both $p'$ and $q'$. Now, let $q_2\in\Qor$ be a reduction of $q''$. Hence, as
   $\dot{A}$ is the $\Por$-name of a maximal antichain in $\Qor/\Por$, there exist $q,q_3\in\Qor$ and $p\in\Por$ such that $q_3\leq q,q_2$ and $p$ is a reduction of $q_3$ that forces $q\in\dot{A}$. Find $q_4\in\Qor$ such that $q_4\leq p,q_3$. As $q_4\leq q_2$, there exists $q'''\in\Qor'$ extending $q''$ and $q_4$. Now, let $p''\in\Por'$ be a reduction of $q'''$ such that $p''\leq p,p'$. Thus, $p''$ forces that $q\in\dot{A}$, $q'''\in\Qor'/\Por'$ and
   $q'''\leq q,q'$.
\end{proof}

\begin{corollary}\label{CorrQuotEmb}
   Let $\langle\Por,\Qor,\Por',\Qor'\rangle$ and $\langle\Qor,\Ror,\Qor',\Ror'\rangle$ be correct systems. Then, $\Por'$ forces that the system $\langle\Qor/\Por,\Ror/\Por,\Qor'/\Por',\Ror'/\Por'\rangle$ is correct with respect to $V^\Por$.
\end{corollary}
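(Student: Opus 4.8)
The plan is to verify, for a $\Por'$-generic $G'$ over $V$ (with $G:=G'\cap\Por$ the induced $\Por$-generic, legitimate since the first system gives $\Por\lessdot\Por'$, and $M:=V[G]=V^\Por$), each clause of ``correct with respect to $M$'' from Definition \ref{DefCorr} for the system $\langle\Qor/\Por,\Ror/\Por,\Qor'/\Por',\Ror'/\Por'\rangle$. The four embeddings split into two kinds. The two ``vertical'' embeddings $\Qor/\Por\lessdot\Ror/\Por$ (in $M$) and $\Qor'/\Por'\lessdot\Ror'/\Por'$ (in $V[G']$) are instances of the standard quotient fact for a tower $\Por\lessdot\Qor\lessdot\Ror$ (resp.\ $\Por'\lessdot\Qor'\lessdot\Ror'$), all of whose links are supplied by the two hypothesised systems. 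The embedding $\Qor/\Por\lessdot_M\Qor'/\Por'$ is exactly the conclusion of Lemma \ref{QuotEmb} applied to the first correct system $\langle\Por,\Qor,\Por',\Qor'\rangle$.

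The remaining embedding $\Ror/\Por\lessdot_M\Ror'/\Por'$ is the one point where the two systems must be combined, and I would obtain it by first proving an auxiliary claim: the ``vertically composed'' system $\langle\Por,\Ror,\Por',\Ror'\rangle$ is correct. All its embeddings are immediate ($\Por\lessdot\Ror$ and $\Por'\lessdot\Ror'$ by composing links, $\Ror\lessdot\Ror'$ and $\Por\lessdot\Por'$ from the given systems), so only the correctness clause remains. Using the reduction form of Definition \ref{DefCorr}, I take $p\in\Ror$, $y\in\Por'$ with a common reduction $r_*\in\Por$ and chase compatibility through the two intermediate posets: choose a witness $p_1\leq r_*,p$ in $\Ror$ and a reduction $s\in\Qor$ of $p_1$ with $s\leq r_*$ (so $s$ is also a reduction of $p$), and a reduction $r_s\leq r_*$ of $s$ in $\Por$ (so $r_s$ reduces both $s$ and $y$); correctness of the first system $\langle\Por,\Qor,\Por',\Qor'\rangle$ then yields $z\leq s,y$ in $\Qor'$, and correctness of the second system $\langle\Qor,\Ror,\Qor',\Ror'\rangle$ --- in its reduction form, since $s$ reduces $p$ --- yields $w\leq z,p$ in $\Ror'$, whence $w\leq p,y$. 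With this claim in hand, Lemma \ref{QuotEmb} applied to $\langle\Por,\Ror,\Por',\Ror'\rangle$ delivers $\Ror/\Por\lessdot_M\Ror'/\Por'$.

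Finally I would verify the correctness clause for the quotient system itself. Working in $V[G']$, I take $\bar p\in\Qor/\Por$ and $\bar q\in\Ror/\Por$ with $\bar p$ a reduction of $\bar q$ in $M$, and an arbitrary $\bar p'\leq\bar p$ in $\Qor'/\Por'$, and must produce a common extension of $\bar p'$ and $\bar q$ in $\Ror'/\Por'$. The idea is to re-run the three-step chase of the previous paragraph, but now relative to the generic: a reduction between quotient conditions reflects, on a dense set of conditions below a member of $G$, a reduction between the underlying conditions of $\Qor$ and $\Ror$, and compatibility in $\Ror'$ of conditions having reductions in $G'$ projects back to compatibility in $\Ror'/\Por'$. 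Thus correctness of the first system moves $\bar p'$ to a condition below $\bar p$ in $\Qor'$ compatible (modulo $G'$) with the relevant $\Qor$-reduction of $\bar q$, and correctness of the second system pushes this up to a common $\Ror'$-extension lying in $\Ror'/\Por'$.

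I expect the main obstacle to be this last step: faithfully translating the reduction and compatibility relations of the quotient posets --- which are relations in the models $M$ and $V[G']$ --- into relations among the underlying conditions of $\Por\subseteq\Qor\subseteq\Ror$ and their primed counterparts, while respecting the generic filters $G\subseteq G'$. The difficulty is sharpened by the non-uniqueness of reductions in posets (as opposed to Boolean algebras): one cannot appeal to canonical projections, so each reduction must be chosen below a prescribed condition (as in the choice of $s\leq r_*$ above) to keep the compatibilities aligned across the two systems, and the ``$\lessdot_M$'' clauses must be checked by tracking maximal antichains through the quotients exactly as in the proofs of Lemmas \ref{2stepitemb} and \ref{QuotEmb}.
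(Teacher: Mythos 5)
Your handling of the four embeddings is sound: $\Qor/\Por\lessdot_{V^\Por}\Qor'/\Por'$ is indeed Lemma \ref{QuotEmb} applied to the first system, and your auxiliary claim that the ``vertically composed'' system $\langle\Por,\Ror,\Por',\Ror'\rangle$ is correct is a genuine lemma whose proof (the chase through $p_1$, $s$, $r_s$, $z$, $w$) is correct; together with Lemma \ref{QuotEmb} it does give $\Ror/\Por\lessdot_{V^\Por}\Ror'/\Por'$. The gap is in your last step, which is the actual content of the corollary: the correctness clause of the quotient system is never proved, only described, and the two assertions your sketch rests on are precisely the nontrivial points. First, ``a reduction between quotient conditions reflects, on a dense set, a reduction between the underlying conditions'' is not a routine density fact: that $\bar p$ is a reduction of $\bar q$ in $(\Qor/\Por,\Ror/\Por)$ only constrains those extensions of $\bar p$ in $\Qor$ that \emph{lie in} $\Qor/\Por$, i.e.\ that have reductions in the generic filter, so this relation is strictly weaker than reduction in $V$ and depends on the generic; making the reflection precise amounts to showing that $\bar p$ forces, in $\Qor/\Por$ over $V^\Por$, that $\bar q\in\Ror/\Qor$ --- which is exactly the identification $(\Ror/\Por)/(\Qor/\Por)\simeq\Ror/\Qor$ that you never establish. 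Second, even after the correctness of the two given systems hands you a common extension of $\bar p'$ and $\bar q$ in $\Ror'$, you must still exhibit a reduction of that extension lying in the $\Por'$-generic in order to conclude that it belongs to $\Ror'/\Por'$ (your ``projects back'' step); this needs another antichain/reduction chase of the kind in the second half of the proof of Lemma \ref{QuotEmb}, which you do not carry out. As it stands, the proposal proves the embedding skeleton of the statement but not its correctness assertion.

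The missing idea is the decomposition the paper uses: working in $V^{\Por'}$, write $\Ror/\Por\simeq(\Qor/\Por)\ast(\Ror/\Qor)$ and $\Ror'/\Por'\simeq(\Qor'/\Por')\ast(\Ror'/\Qor')$. Lemma \ref{QuotEmb} applied to $\langle\Por,\Qor,\Por',\Qor'\rangle$ gives $\Qor/\Por\lessdot_{V^\Por}\Qor'/\Por'$, and applied to $\langle\Qor,\Ror,\Qor',\Ror'\rangle$ gives that $\Qor'/\Por'$ forces $\Ror/\Qor\lessdot_{V^\Qor}\Ror'/\Qor'$; these are exactly the hypotheses of Lemma \ref{2stepitemb}, whose conclusion already contains the statement that $\langle\Qor/\Por,(\Qor/\Por)\ast(\Ror/\Qor),\Qor'/\Por',(\Qor'/\Por')\ast(\Ror'/\Qor')\rangle$ is correct with respect to $V^\Por$. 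That single application packages both of your problematic steps (reflection of the reduction relation into the second coordinate of a two-step iteration, and re-entry of the common extension into the quotient) into facts already proved, and it also renders your composition lemma unnecessary, since $\Ror/\Por\lessdot_{V^\Por}\Ror'/\Por'$ is part of its conclusion. Your composition lemma is worth keeping as a correct standalone observation, but it does not substitute for the missing correctness argument.
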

\begin{proof}
   By Lemma \ref{QuotEmb} we only need to prove correctness (to get, e.g., $\Vdash_\Por\Qor/\Por\lessdot\Ror/\Por$, note that $\langle\Por,\Qor,\Por,\Ror\rangle$ is a correct system). In $V^{\Por'}$, we know that $\Ror/\Por\simeq(\Qor/\Por)\ast(\Ror/\Qor)$ and $\Ror'/\Por'\simeq(\Qor'/\Por')\ast(\Ror'/\Qor')$. As $\Qor/\Por\lessdot_{V^\Por}\Qor'/\Por'$ and $\Qor'/\Por'$ forces that $\Ror/\Qor\lessdot_{V^\Qor}\Ror'/\Qor'$ by Lemma \ref{QuotEmb}, we get the correctness we are looking for from Lemma \ref{2stepitemb}.
\end{proof}

Recall that a partial order $\langle I,\leq\rangle$ is \emph{directed} iff any two elements of $I$ have an upper bound in $I$. A sequence of posets $\langle\Por_i\rangle_{i\in I}$ is a \emph{directed system of posets} if, for any $i< j$ in $I$, $\Por_i\lessdot\Por_j$. In this case, the \emph{direct limit of $\langle\Por_i\rangle_{i\in I}$} is defined as the partial order\footnote{In a more general way, we can think of a directed system with complete embeddings $e_{i,j}:\Por_i\to\Por_j$ for $i<j$ in $I$ such that, for $i<j<k$, $e_{j,k}\circ e_{i,j}=e_{i,k}$. This allows to define a direct limit of the system as well.} $\limdir_{i\in I}\Por_i:=\bigcup_{i\in I}\Por_i$. It is clear that, for any $i\in I$, $\Por_i$ is a complete suborder of this direct limit.

\begin{lemma}[Embeddability of direct limits {\cite{Br-Amalg}}, see also
{\cite[Lemma 1.2]{Br-Luminy}}]\label{dirlimEmb}
   Let $I\in M$ be a directed set, $\langle\Por_i\rangle_{i\in I}\in M$ and $\langle\Qor_i\rangle_{i\in I}\in N$ directed systems of posets such that
   \begin{enumerate}[(i)]
      \item for each $i\in I$, $\Por_i\lessdot_M\Qor_i$ and
      \item whenever $i\leq j$, $\langle\Por_i,\Por_j,\Qor_i,\Qor_j\rangle$ is a correct system with respect to $M$
   \end{enumerate}
   Then, $\Por:=\limdir_{i\in I}\Por_i$ is a complete suborder of $\Qor:=\limdir_{i\in I}\Qor_i$ with respect to $M$ and, for any $i\in I$, $\langle\Por_i,\Por,\Qor_i,\Qor\rangle$ is a correct system with respect to $M$.
\end{lemma}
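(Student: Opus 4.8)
My plan is to establish the two assertions in the order stated: first the completeness $\Por\lessdot_M\Qor$, and then the correctness of $\langle\Por_i,\Por,\Qor_i,\Qor\rangle$ with respect to $M$, the latter being almost immediate once the former is available (note there is no circularity, since the completeness step uses only hypotheses (i) and (ii), not the correctness conclusion). Throughout I would use that $\Por=\bigcup_{i\in I}\Por_i\in M$, being definable from $\langle\Por_i\rangle_{i\in I}\in M$, and likewise that the auxiliary objects built below lie in $M$. Two preliminary observations are recorded first. Since $I$ is directed and $\Por_{i'}\subseteq\Por_j$ whenever $i'\leq j$, any two (hence finitely many) conditions of $\Por$ lie in a common $\Por_j$; and because each $\Por_j\lessdot\Por$ is a complete suborder, two conditions of $\Por_j$ are compatible in $\Por$ iff they are compatible in $\Por_j$ (the same in $N$ for $\Qor$). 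As a consequence I obtain a \emph{coincidence lemma}: for $p\in\Por_i$ and $q\in\Por_j$ with $i\leq j$, $p$ reduces $q$ with respect to $\Por_i,\Por$ iff it reduces $q$ with respect to $\Por_i,\Por_j$, because any extension $p'\leq p$ in $\Por_i$ is compatible with $q$ in $\Por$ iff it is compatible with $q$ in $\Por_j$.

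\textbf{Completeness.} Let $A\in M$ be a maximal antichain of $\Por$ and fix $q\in\Qor$, say $q\in\Qor_i$; I must produce $a\in A$ compatible with $q$ in $\Qor$. Working in $M$, I set
\[
   A_i=\{r\in\Por_i\colon r\text{ reduces some }a\in A\text{ with respect to }\Por_i,\Por\}.
\]
I would first check that $A_i$ is predense in $\Por_i$: given $p\in\Por_i\subseteq\Por$, maximality of $A$ yields $a\in A$ and $s\leq p,a$ in $\Por$; since $\Por_i\lessdot\Por$, $s$ has a reduction $r_0\in\Por_i$, which is then also a reduction of $a$ (as $s\leq a$), and $r_0$ is compatible with $p$ (a common extension of $r_0$ and $s$ lies below $p$, so $r_0,p$ are compatible in $\Por$ and, by the reflection observation, in $\Por_i$); any $r\leq r_0,p$ in $\Por_i$ then lies in $A_i$ and below $p$. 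Using choice in $M$, refine $A_i$ to a maximal antichain $A_i'\in M$ of $\Por_i$ together with $f\colon A_i'\to A$ assigning to each $r'$ some $a$ of which $r'$ is a reduction. Now $A_i'$ is a maximal antichain of $\Por_i$ in $M$, so hypothesis (i) makes it maximal in $\Qor_i$; hence some $r'\in A_i'$ is compatible with $q$ in $\Qor_i$, say $q'\leq q,r'$. Writing $a=f(r')\in\Por_j$ for a suitable $j\geq i$, the coincidence lemma makes $r'$ a reduction of $a$ with respect to $\Por_i,\Por_j$, so correctness of $\langle\Por_i,\Por_j,\Qor_i,\Qor_j\rangle$ with respect to $M$ (hypothesis (ii)) makes $r'$ a reduction of $a$ with respect to $\Qor_i,\Qor_j$. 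Applying this to the extension $q'\leq r'$ gives $q''\leq q',a$ in $\Qor_j$, and since $q''\leq q,a$ this witnesses compatibility of $q$ and $a$ in $\Qor_j\subseteq\Qor$, as required.

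\textbf{Correctness.} With $\Por\lessdot_M\Qor$ established, the system $\langle\Por_i,\Por,\Qor_i,\Qor\rangle$ is well-formed, and its correctness with respect to $M$ is direct: if $p\in\Por_i$ reduces $q\in\Por$ with respect to $\Por_i,\Por$, choose $j\geq i$ with $q\in\Por_j$; the coincidence lemma gives that $p$ reduces $q$ with respect to $\Por_i,\Por_j$, and correctness of $\langle\Por_i,\Por_j,\Qor_i,\Qor_j\rangle$ (hypothesis (ii)) then gives that $p$ reduces $q$ with respect to $\Qor_i,\Qor_j$, hence with respect to $\Qor_i,\Qor$ because $\Qor_j\lessdot\Qor$. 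The main obstacle is the completeness step: the point is to transfer maximality of $A$ in the whole limit $\Por$ down to a single stage $\Por_i$, where hypothesis (i) can be invoked, and then back up through stage-wise correctness; this is exactly what the auxiliary antichain $A_i'$ accomplishes, and the delicate part is the predensity of $A_i$ together with the requirement that $A_i$, $A_i'$ and $f$ all be definable inside $M$, without which the model-relative embedding $\Por_i\lessdot_M\Qor_i$ could not be applied.
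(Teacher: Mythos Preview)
Your proof is correct and follows essentially the same route as the paper's: fix $q\in\Qor_i$, build in $M$ a (pre)dense subset of $\Por_i$ consisting of reductions of members of $A$, invoke (i) to find such a reduction compatible with $q$, and then use the stage-wise correctness (ii) to lift to compatibility in $\Qor$. You are simply more explicit than the paper --- you isolate the ``coincidence lemma'' and the auxiliary antichain $A_i'$ with the choice function $f$, whereas the paper packs the same content into one density sentence --- and you also spell out the correctness assertion for $\langle\Por_i,\Por,\Qor_i,\Qor\rangle$, which the paper's proof leaves to the reader.
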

\begin{proof}
   Let $A\in M$ be a maximal antichain of $\Por$. Let $q\in\Qor$, so there is some $i\in I$ such that $q\in\Qor_i$. Work within $M$. Enumerate $A:=\{p_\alpha\ /\ \alpha<\delta\}$ for some ordinal $\delta$ and, for each $\alpha<\delta$, choose $j_\alpha\geq i$ in $I$ such that $p_\alpha\in\Por_{j_\alpha}$. Now, if $p\in\Por_i$, there is some $\alpha<\delta$ such that $p$ is compatible with $p_\alpha$ in $\Por_{j_\alpha}$, so there exists $p'\leq p$ which is a reduction of $p_\alpha$ with respect to $\Por_i,\Por_{j_\alpha}$.

   The previous density argument implies, in $N$, that $q$ is compatible with some $p\in\Por_i$ which is a reduction of $p_\alpha$ for some $\alpha<\delta$. By \ \!(ii), $p$ is a reduction of $p_\alpha$ with respect to $\Qor_i,\Qor_{j_\alpha}$, which implies that $q$ is compatible with $p_\alpha$.
\end{proof}

\begin{lemma}\label{dirlimquot}
   Let $\langle\Por_i\rangle_{i\in I}$ be a directed system of posets, $\Por$ its direct limit. Assume that $\Qor$ is a complete suborder of $\Por_i$ for all $i\in I$. Then, $\Qor$ forces that $\Por/\Qor=\limdir_{i\in I}\Por_i/\Qor$.
\end{lemma}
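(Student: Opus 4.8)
The plan is to prove the $\Qor$-forced equality of posets by first checking that the right-hand side is a genuine direct limit, then splitting the equality into the two inclusions, the harder of which reduces to an absoluteness property of the reduction relation.

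First I would record the ambient embeddings. Since $\Qor\lessdot\Por_i$ for each $i$ and each $\Por_i\lessdot\Por$ (every $\Por_i$ is a complete suborder of its direct limit), transitivity of complete embeddings yields $\Qor\lessdot\Por$, so the quotient $\Por/\Qor$ is defined. Next, for $i\leq j$ in $I$ the system $\langle\Qor,\Por_i,\Qor,\Por_j\rangle$ is correct: the relevant reductions in the bottom copy of $\Qor$ are conditions of $\Qor$ itself, and if $r\in\Qor$ reduces $p\in\Por_i$ and $r$ is compatible in $\Qor$ with $q\in\Qor$, then a common extension $r'\leq r,q$ in $\Qor\subseteq\Por_j$ is compatible with $p$ in $\Por_i\subseteq\Por_j$, witnessing that $p,q$ are compatible in $\Por_j$. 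By Lemma \ref{QuotEmb} this gives $\Vdash_\Qor\Por_i/\Qor\lessdot\Por_j/\Qor$, so $\langle\Por_i/\Qor\rangle_{i\in I}$ is a directed system of posets in $V^\Qor$ and $\limdir_{i\in I}\Por_i/\Qor=\bigcup_{i\in I}\Por_i/\Qor$ is well-defined. The identical argument applied to $\langle\Qor,\Por_i,\Qor,\Por\rangle$ gives $\Vdash_\Qor\Por_i/\Qor\lessdot\Por/\Qor$, and in particular $\Por_i/\Qor\subseteq\Por/\Qor$; since all of these quotients carry the order inherited from $\Por$, this already delivers the inclusion $\limdir_{i\in I}\Por_i/\Qor\subseteq\Por/\Qor$ as suborders.

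It remains to force the reverse inclusion of underlying sets: every $p\in\Por/\Qor$ lies in some $\Por_i/\Qor$. Given $p\in\Por=\bigcup_{i\in I}\Por_i$, pick $i$ with $p\in\Por_i$. The crux is that, for $q\in\Qor$, $q$ is a reduction of $p$ with respect to $\Qor,\Por_i$ if and only if it is a reduction with respect to $\Qor,\Por$. Indeed, reductions are defined via compatibility, and compatibility is absolute between $\Por_i$ and $\Por$ for conditions of $\Por_i$: if $a,b\in\Por_i$ were compatible in $\Por$ but incompatible in $\Por_i$, extend $\{a,b\}$ to a maximal antichain $A$ of $\Por_i$, which by $\Por_i\lessdot\Por$ is a maximal antichain of $\Por$ and hence an antichain there, contradicting compatibility of $a,b$ in $\Por$ (the converse direction being trivial). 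Since $\Qor\subseteq\Por_i$, every $q'\leq q$ in $\Qor$ as well as $p$ are conditions of $\Por_i$, so the compatibility witnesses defining a reduction transfer verbatim between the two contexts. Consequently, if the $\Qor$-generic contains a reduction of $p$ with respect to $\Qor,\Por$, that same condition reduces $p$ with respect to $\Qor,\Por_i$, whence $p\in\Por_i/\Qor$; this gives $\Por/\Qor\subseteq\limdir_{i\in I}\Por_i/\Qor$ and completes the equality.

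The main obstacle—in fact the only non-bookkeeping step—is the absoluteness of the reduction relation between $\Por_i$ and $\Por$, which rests on compatibility of $\Por_i$-conditions being \emph{reflected}, not merely preserved, by the complete embedding $\Por_i\lessdot\Por$. Everything else is a routine combination of transitivity of complete embeddings with Lemma \ref{QuotEmb}.
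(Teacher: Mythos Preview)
Your proof is correct and follows the same route as the paper: invoke correctness of $\langle\Qor,\Por_i,\Qor,\Por\rangle$ and Lemma~\ref{QuotEmb} to get $\Por_i/\Qor\lessdot\Por/\Qor$, then verify the set equality $\Por/\Qor=\bigcup_{i\in I}\Por_i/\Qor$. The paper dismisses the latter as ``easy to see'' and omits the check that $\langle\Por_i/\Qor\rangle_{i\in I}$ is itself directed, whereas you spell out both---in particular the absoluteness-of-reductions argument, which is exactly the content hidden behind the paper's one-line remark.
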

\begin{proof}
   For $i\in I$, as $\langle\Qor,\Por_i,\Qor,\Por\rangle$ is a correct system, by Lemma \ref{QuotEmb} $\Qor$ forces that $\Por_i/\Qor$ is a complete suborder of $\Por/\Qor$. It is easy to see that $\Qor$ forces $\Por/\Qor=\bigcup_{i\in I}\Por_i/\Qor$.
\end{proof}

\section{Preservation properties}\label{SecPresProp}

Fix, for this section, an uncountable regular cardinal $\theta$ and a cardinal $\lambda\geq\theta$.

\begin{context}[{\cite{Go-Tools},\cite[Sect. 6.4]{BarJu}}]\label{ContextPres}
   Fix $\langle\sqsubset_n\rangle_{n<\omega}$ an increasing sequence of 2-place closed relations in $\omega^\omega$ such that, for any $n<\omega$ and $g\in\omega^\omega$, $(\sqsubset_n)^g=\{f\in\omega^\omega\ /\ f\sqsubset_n g\}$ is (closed) nowhere dense.

   For $f,g\in\omega^\omega$, say that \emph{$g$ $\sqsubset$-dominates $f$} if $f\sqsubset g$. $F\subseteq\omega^\omega$ is a \emph{$\sqsubset$-unbounded family} if no function in $\omega^\omega$ dominates all the members of $F$. Associate with this notion the cardinal $\bfrak_\sqsubset$, which is the least size of a $\sqsubset$-unbounded family. Dually, say that $C\subseteq\omega^\omega$ is a \emph{$\sqsubset$-dominating family} if any real in $\omega^\omega$ is dominated by some member of $C$. The cardinal $\dfrak_\sqsubset$ is the least size of a $\sqsubset$-dominating family. For a set $Y$ and a real $f\in\omega^\omega$, say that \emph{$f$ is $\sqsubset$-unbounded over $Y$} if $\forall_{g\in\omega^\omega\cap Y}(f\not\sqsubset g)$, which we denote by $f\not\sqsubset Y$.
\end{context}

Although this context is defined for $\omega^\omega$, the domain and codomain of $\sqsubset$ can be any uncountable Polish space coded by reals in $\omega^\omega$.

\begin{example}\label{ExBdRelation}
  \begin{enumerate}[(1)]
     \item For $n<\omega$ and $f,g\in\omega^\omega$, $f\leq^*_n g$ denotes $\forall_{k\geq n}(f(k)\leq g(k))$, so $f\leq^*g\sii\forall_{n<\omega}^\infty(f(n)\leq g(n))$. The \emph{(un)bounding number} is defined as $\bfrak:=\bfrak_{\leq^*}$ and the \emph{dominating number} is $\dfrak:=\dfrak_{\leq^*}$, which are classical cardinal invariants.
     \item For $n<\omega$ and $A,B\in[\omega]^\omega$, define $A\propto_n B\sii(B\menos n\subseteq A\textrm{\ or }B\menos n\subseteq\omega\menos A)$, so $A\propto B$ iff either $B\subseteq^* A$ or $B\subseteq^*\omega\menos A$, where $X\subseteq^*Y$ means that $Y\menos X$ is finite. Note that $A\not\propto B$ iff \emph{$A$ splits $B$}, that is, $A\cap B$ and $B\menos A$ are infinite. The \emph{splitting number} is defined as $\sfrak:=^*\bfrak_\propto$ and $\rfrak:=\dfrak_\propto$ is the \emph{(un)reaping number}, which are also classical cardinal invariants.
     \item Consider, for $f,g\in\omega^\omega$ and $n<\omega$, $f=^*_n g$ defined as $\forall_{k\geq n}(f(n)=g(n))$. Then, $f=^* g$ iff $\forall^\infty_{k<\omega}(f(k)=g(k))$. Note that $\bfrak_{=^*}=2$ and $\dfrak_{=^*}=\cfrak$.

         Here, the associated cardinal invariants are not that important. We are interested in the meaning of ``$f\in\omega^\omega$ is $=^*$-unbounded over $M$", which is equivalent to $f\notin M$ when $M$ is a model of some finite subset of axioms of $\thzfc$.
  \end{enumerate}
\end{example}

\begin{lemma}\label{b_sqsubset leq nonM}
   $\bfrak_\sqsubset\leq\non(\Mwf)$ and $\cov(\Mwf)\leq\dfrak_\sqsubset$.
\end{lemma}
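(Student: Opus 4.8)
The whole statement follows from one observation: for every $g\in\omega^\omega$ the section $(\sqsubset)^g:=\{f\in\omega^\omega\ /\ f\sqsubset g\}$ is meager. Indeed, since $f\sqsubset g$ holds iff $f\sqsubset_n g$ for some $n<\omega$, we have $(\sqsubset)^g=\bigcup_{n<\omega}(\sqsubset_n)^g$, and by the hypothesis of Context \ref{ContextPres} each $(\sqsubset_n)^g$ is closed nowhere dense; hence $(\sqsubset)^g$ is a countable union of nowhere dense sets, that is, a meager set. (Recall also that $\non(\Mwf)$ and $\cov(\Mwf)$ may be computed in any perfect Polish space, in particular in $\omega^\omega$, so I work there throughout.)

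For the first inequality, I would fix a non-meager set $X\subseteq\omega^\omega$ with $|X|=\non(\Mwf)$ and argue that $X$ is already a $\sqsubset$-unbounded family. If it were not, there would be some $g\in\omega^\omega$ that $\sqsubset$-dominates every member of $X$, i.e.\ $X\subseteq(\sqsubset)^g$; but $(\sqsubset)^g$ is meager, contradicting the non-meagerness of $X$. Therefore $X$ witnesses $\bfrak_\sqsubset\leq|X|=\non(\Mwf)$.

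For the second inequality, I would fix a $\sqsubset$-dominating family $C\subseteq\omega^\omega$ with $|C|=\dfrak_\sqsubset$ and consider the family $\{(\sqsubset)^g\ /\ g\in C\}$ of meager sets. Since every $f\in\omega^\omega$ is $\sqsubset$-dominated by some $g\in C$, i.e.\ $f\in(\sqsubset)^g$, this family covers $\omega^\omega$. Thus $\omega^\omega$ is covered by $|C|$ meager sets, which yields $\cov(\Mwf)\leq|C|=\dfrak_\sqsubset$.

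There is no serious obstacle here: the only point requiring the specific hypotheses of Context \ref{ContextPres} is the meagerness of the sections $(\sqsubset)^g$, and once that is in hand the two inequalities are immediate duals of one another. The only things to be mildly careful about are choosing the correct witness in each case (a non-meager set for $\bfrak_\sqsubset\leq\non(\Mwf)$, and the sections indexed by a dominating family for $\cov(\Mwf)\leq\dfrak_\sqsubset$) and recording that the cardinal characteristics of $\Mwf$ do not depend on the underlying perfect Polish space.
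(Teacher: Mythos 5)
Your proof is correct and follows exactly the paper's approach: the paper's own proof simply states that the result is immediate from the meagerness of $(\sqsubset)^g$ for every $g\in\omega^\omega$, and your argument is the natural unwinding of that remark (meagerness from the countable union of the closed nowhere dense sections $(\sqsubset_n)^g$, then a non-meager set as a $\sqsubset$-unbounded family and the sections over a $\sqsubset$-dominating family as a meager cover). Nothing is missing; you have merely written out the details the paper leaves implicit.
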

\begin{proof}
   Immediate from the fact that $(\sqsubset)^g$ is meager for any $g\in\omega^\omega$.
\end{proof}

\begin{definition}\label{DefStrUnb}
  Let $F\subseteq\omega^\omega$. Say that $F$ is \emph{$\theta$-$\sqsubset$-unbounded} if, for any $X\subseteq\omega^\omega$ of size $<\theta$, there is an $f\in F$ such that $f\not\sqsubset X$.
\end{definition}

Clearly, any $\theta$-$\sqsubset$-unbounded family is $\sqsubset$-unbounded, so

\begin{lemma}\label{StrUnb-b leq}
   If $F\subseteq\omega^\omega$ is $\theta$-$\sqsubset$-unbounded, then $\bfrak_\sqsubset\leq|F|$ and $\theta\leq\dfrak_\sqsubset$.
\end{lemma}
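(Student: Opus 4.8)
The plan is to read off both inequalities directly from the definitions, using the parenthetical remark that any $\theta$-$\sqsubset$-unbounded family is automatically $\sqsubset$-unbounded. The only genuine content is to unwind the notation $f\not\sqsubset X$ correctly: by Context \ref{ContextPres} it means $f\not\sqsubset g$ for \emph{every} $g\in\omega^\omega\cap X$, i.e. $f$ is not $\sqsubset$-dominated by any member of $X$.

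For the bound $\bfrak_\sqsubset\leq|F|$, I would first verify the claimed implication. Given any single $g\in\omega^\omega$, apply the definition of $\theta$-$\sqsubset$-unboundedness to the set $X=\{g\}$; since $\theta$ is uncountable (hence $1<\theta$), this set has size $<\theta$, so there is an $f\in F$ with $f\not\sqsubset g$. Thus no $g$ $\sqsubset$-dominates all of $F$, which is exactly the statement that $F$ is $\sqsubset$-unbounded. Since $\bfrak_\sqsubset$ is defined as the least size of a $\sqsubset$-unbounded family and $F$ is one such family, we conclude $\bfrak_\sqsubset\leq|F|$.

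For the bound $\theta\leq\dfrak_\sqsubset$, I would argue that no family of size $<\theta$ can be $\sqsubset$-dominating. Let $C\subseteq\omega^\omega$ with $|C|<\theta$. Applying the definition of $\theta$-$\sqsubset$-unboundedness to $X=C$ yields an $f\in F$ with $f\not\sqsubset C$, i.e. $f\not\sqsubset g$ for all $g\in C$. Hence $f$ is $\sqsubset$-dominated by no member of $C$, so $C$ fails to be $\sqsubset$-dominating. As this holds for every $C$ of size $<\theta$, every $\sqsubset$-dominating family has size $\geq\theta$, and therefore $\theta\leq\dfrak_\sqsubset$.

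There is no real obstacle here; the statement is essentially a restatement of the definitions. The only point requiring a little care is the use of the hypothesis that $\theta$ is uncountable (so that singletons, and more generally all finite or small sets, have size $<\theta$) in deriving ordinary $\sqsubset$-unboundedness from the $\theta$-version, and keeping straight that $f\not\sqsubset X$ is the negation of ``$f$ is dominated by some element of $X$'' rather than of ``$f$ is dominated by all of $X$''.
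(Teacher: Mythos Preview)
Your proof is correct and follows exactly the intended route: the paper does not even write out a proof, merely noting before the lemma that any $\theta$-$\sqsubset$-unbounded family is $\sqsubset$-unbounded, from which both inequalities are immediate. Your unwinding of the definitions is precisely what the paper leaves implicit.
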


The following is a property that expresses when a forcing notion preserves $\theta$-$\sqsubset$-unbounded families of the ground model.

\begin{definition}[Judah and Shelah {\cite{JuSh-KunenMillerchart}}, {\cite[Def. 6.6.4]{BarJu}}]\label{DefGood}
   A forcing notion $\Por$ is \emph{$\theta$-$\sqsubset$-good} if the following property holds\footnote{According to \cite[Def. 6.6.4]{BarJu}, our property is called \emph{really $\theta$-$\sqsubset$-good} while $\theta$-$\sqsubset$-good stands for another property. However, \cite[Lemma 6.6.5]{BarJu} states that really $\theta$-$\sqsubset$-good implies $\theta$-$\sqsubset$-good, and it is also easy to see that the converse is true for $\theta$-cc posets, see details in \cite[Lemma 2]{Me-MatIt}.}: For any $\Por$-name $\dot{h}$ for a real in $\omega^\omega$, there exists a nonempty $Y\subseteq\omega^\omega$ (in the ground model) of size $<\theta$ such that, for any $f\in\omega^\omega$, if $f\not\sqsubset Y$ then $\Vdash f\not\sqsubset\dot{h}$.

   Say that $\Por$ is \emph{$\sqsubset$-good} if it is $\aleph_1$-$\sqsubset$-good\footnote{In \cite{Me-MatIt,Me-TempIt}, ``$\Por$ is $\theta$-$\sqsubset$-good" is denoted by $(+^\theta_{\Por,\sqsubset})$ and ``$\Por$ is $\sqsubset$-good" is denoted by $(+_{\Por,\sqsubset})$}.
\end{definition}

Note that $\theta<\theta'$ implies that any $\theta$-$\sqsubset$-good poset is $\theta'$-$\sqsubset$-good. Also, if $\Por\lessdot\Qor$ and $\Qor$ is $\theta$-$\sqsubset$-good, then $\Por$ is $\theta$-$\sqsubset$-good.

\begin{example}\label{ExGood}
  \begin{enumerate}[(1)]
     \item Miller \cite{Miller} proved that $\Eor$, the canonical forcing that adds an eventually different real, is $\leq^*$-good. Also, any $\omega^\omega$-bounding poset is $\leq^*$-good, in particular, random forcing.
     \item Baumgartner and Dordal \cite{BaumDor-Dom} proved that Hechler forcing $\Dor$ (the canonical forcing that adds a dominating real) is $\propto$-good. See also {\cite[Lemma 3.8]{Br-Bog}} for a proof.
     \item Any $\theta$-cc poset is $\theta$-$=^*$-good. In particular, any ccc poset is $=^*$-good.
     \item Any poset of size $<\theta$ is $\theta$-$\sqsubset$-good. In particular, Cohen forcing $\Cor$ is $\sqsubset$-good. For a proof, see \cite[Thm 6.4.7]{BarJu}, also \cite[Lemma 4]{Me-MatIt}.
  \end{enumerate}
\end{example}

\begin{lemma}[{\cite[Lemma 6.4.8]{BarJu}}, see also {\cite[Lemma 3]{Me-MatIt}}]\label{GoodPresStrUnb}
   Assume that $\Por$ is $\theta$-$\sqsubset$-good.
   \begin{enumerate}[(a)]
      \item If $F\subseteq\omega^\omega$ is $\theta$-$\sqsubset$-unbounded, then $\Por$ forces that $F$ is still $\theta$-$\sqsubset$-unbounded.
      \item If $\dfrak_\sqsubset\geq\lambda$, then $\Por$ forces that $\dfrak_\sqsubset\geq\lambda$.
   \end{enumerate}
\end{lemma}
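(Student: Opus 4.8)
The plan is to unwind the definitions and push a name-counting argument through the goodness property. For part (a), I want to show that in the extension, given any potential witness set $X$ of size $<\theta$, some $f\in F$ satisfies $f\not\sqsubset X$. The subtlety is that $X$ lives in the extension, so its elements are given by $\Por$-names $\dot{h}_\xi$ for $\xi<\mu$ with $\mu<\theta$. For each such name, goodness supplies a ground-model set $Y_\xi\subseteq\omega^\omega$ of size $<\theta$ such that for all $f$, $f\not\sqsubset Y_\xi$ forces $f\not\sqsubset\dot{h}_\xi$. First I would collect $Y:=\bigcup_{\xi<\mu}Y_\xi$; since $\theta$ is regular and uncountable and we are taking a union of $<\theta$ many sets each of size $<\theta$, we get $|Y|<\theta$. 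By the $\theta$-$\sqsubset$-unboundedness of $F$ in the ground model, there is an $f\in F$ with $f\not\sqsubset Y$, hence $f\not\sqsubset Y_\xi$ for every $\xi$, and so $\Vdash f\not\sqsubset\dot{h}_\xi$ for all $\xi<\mu$. This says exactly that $\Por$ forces $f\not\sqsubset X$, which is what we need.

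There is one genuine gap I must address: the witness set $X$ in the extension need not be named by a single sequence of $<\theta$ names decided in the ground model, because the indexing and even the size $\mu$ could be forced. The clean way around this is to argue below an arbitrary condition: fix a $\Por$-name $\dot{X}$ and a condition $p$ forcing $|\dot{X}|<\theta$; below $p$ (refining to a suitable condition or using that $\Por$ is $\theta$-cc when $\theta$ is its chain-condition bound, which holds in the intended applications) one can find a single ordinal $\mu<\theta$ and names $\langle\dot{h}_\xi\rangle_{\xi<\mu}$ enumerating $\dot{X}$. The argument of the previous paragraph then goes through below $p$, and since $p$ was arbitrary a density argument yields the conclusion outright. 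I would state this reduction explicitly, as it is the only place where the regularity of $\theta$ and the bound $|X|<\theta$ interact nontrivially.

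For part (b), I would argue contrapositively through unboundedness, or directly. Suppose $\dfrak_\sqsubset\geq\lambda$ holds in the ground model. Equivalently, every $\sqsubset$-dominating family has size $\geq\lambda$, i.e.\ no family of size $<\lambda$ is $\sqsubset$-dominating. Take any $\Por$-name $\dot{C}$ for a family of size $<\lambda$ in the extension; I must produce a real not $\sqsubset$-dominated by any member of $\dot{C}$. Since $\lambda\geq\theta$, a family of size $<\lambda$ decomposes, but the cleaner route is to observe that $\dfrak_\sqsubset\geq\lambda\geq\theta$ already gives, by Lemma \ref{StrUnb-b leq} read in reverse, that there exist $\theta$-$\sqsubset$-unbounded families, or simply to apply part (a) to a suitable witnessing family. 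Concretely, a set $C$ of size $<\lambda$ fails to $\sqsubset$-dominate iff there is an $f$ with $f\not\sqsubset C$; preserving this for all names $\dot{C}$ of small families is precisely the content obtained by running the goodness argument of part (a) with the roles arranged so that the preserved real witnesses non-domination.

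The main obstacle is the reduction in part (b): one must ensure that a name for a family of size $<\lambda$ in the extension can be handled by the ground-model witness mechanism, which natively produces witness sets of size $<\theta$ rather than $<\lambda$. I expect to resolve this by the standard move of covering the $<\lambda$-sized family with $<\lambda$ many names and invoking goodness for each, then using that $\dfrak_\sqsubset\geq\lambda$ in the ground model to find an $f$ escaping the union of the resulting witness sets; here the fact that $\lambda\geq\theta$ and that goodness is really about individual names (so it applies uniformly to each of the $<\lambda$ names) is what makes the bookkeeping close. I would present part (b) as a corollary of the name-by-name preservation established in part (a), flagging this covering step as the crux.
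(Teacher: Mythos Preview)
The paper does not supply its own proof of this lemma; it merely cites \cite[Lemma 6.4.8]{BarJu} and \cite[Lemma 3]{Me-MatIt}. Your argument is the standard one and is correct. Two small remarks: first, your hedge about needing $\theta$-cc in part (a) is unnecessary---below any $p$ forcing $|\dot{X}|<\theta$ you may extend to a $p'$ deciding $|\dot{X}|=\mu<\theta$, and then there are names $\langle\dot{h}_\xi\rangle_{\xi<\mu}$ with $p'\Vdash\dot{X}=\{\dot{h}_\xi:\xi<\mu\}$; the rest of the argument yields an $f\in F$ with $p'\Vdash f\not\sqsubset\dot{X}$, and density finishes. Second, in part (b) the cardinal arithmetic you flag does close: with $\mu<\lambda$ names and witness sets $Y_\xi$ each of size $<\theta$, one has $|Y|\leq\sum_{\xi<\mu}|Y_\xi|$, which is $<\theta\leq\lambda$ when $\mu<\theta$ (by regularity of $\theta$) and is $\leq\mu\cdot\theta=\mu<\lambda$ when $\theta\leq\mu<\lambda$; so $Y$ is not $\sqsubset$-dominating in $V$, giving the required $f$.
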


Judah and Shelah \cite{JuSh-KunenMillerchart} proved that $\theta$-$\sqsubset$-goodness is preserved in fsi of $\theta$-$\sqsubset$-good $\theta$-cc posets. We generalize the preservation in the limits steps in Theorem \ref{PresGoodLimdir}.

\begin{lemma}[{\cite[Lemma 6.4.11]{BarJu}}]\label{PresGood2step}
   Let $\Por$ be a poset and $\Qnm$ a $\Por$-name for a poset. If $\Por$ is $\theta$-cc, $\theta$-$\sqsubset$-good and $\Por$ forces that $\Qnm$ is $\theta$-$\sqsubset$-good, then $\Por\ast\Qnm$ is $\theta$-$\sqsubset$-good.
\end{lemma}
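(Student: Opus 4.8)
The plan is to compose the two goodness hypotheses, using the $\theta$-cc of $\Por$ as the bridge that converts a $\Por$-name for a small witnessing set into a genuine ground-model set of the required size. Fix a $\Por\ast\Qnm$-name $\dot{h}$ for a real in $\omega^\omega$; we must produce a nonempty $Y\subseteq\omega^\omega$ in the ground model with $|Y|<\theta$ such that $f\not\sqsubset Y$ implies $\Vdash_{\Por\ast\Qnm}f\not\sqsubset\dot{h}$ for every $f\in\omega^\omega$.

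First I would work inside $V^\Por$, where $\dot{h}$ is (equivalent to) a $\Qnm$-name for a real. Since $\Por$ forces that $\Qnm$ is $\theta$-$\sqsubset$-good, applying the property of Definition \ref{DefGood} in $V^\Por$ yields, in $V^\Por$, a nonempty set of size $<\theta$ witnessing goodness for $\dot h$; by the maximal principle I would capture this by a $\Por$-name $\dot Y_0$ with $\Vdash_\Por|\dot Y_0|<\theta$ such that $\Por$ forces: every $f$ with $f\not\sqsubset\dot Y_0$ satisfies $\Vdash_{\Qnm}f\not\sqsubset\dot{h}$.

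The crux --- and the step where the $\theta$-cc is essential --- is to pull $\dot Y_0$ down to the ground model. Using the $\theta$-cc I would take a maximal antichain deciding $|\dot Y_0|$; since $\theta$ is regular and the antichain has size $<\theta$, the decided values are bounded by some $\delta<\theta$, so $\Vdash_\Por|\dot Y_0|\leq\delta$. Hence there are $\Por$-names $\dot g_\alpha$ (for $\alpha<\delta$) for reals with $\Vdash_\Por\dot Y_0=\{\dot g_\alpha\ /\ \alpha<\delta\}$ (padding any undefined entries with a fixed real). Now, because $\Por$ itself is $\theta$-$\sqsubset$-good, each name $\dot g_\alpha$ admits a ground-model witness $Y_\alpha\subseteq\omega^\omega$ of size $<\theta$ with the property that $f\not\sqsubset Y_\alpha$ implies $\Vdash_\Por f\not\sqsubset\dot g_\alpha$. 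Set $Y:=\bigcup_{\alpha<\delta}Y_\alpha$; this is a ground-model set, and since $\delta<\theta$, each $|Y_\alpha|<\theta$, and $\theta$ is regular, we get $|Y|<\theta$ (take $Y$ nonempty, enlarging trivially if necessary).

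Finally I would verify that $Y$ works. Suppose $f\in\omega^\omega$ and $f\not\sqsubset Y$. Then $f\not\sqsubset Y_\alpha$ for every $\alpha<\delta$, so $\Vdash_\Por f\not\sqsubset\dot g_\alpha$ for each $\alpha$; as $\Vdash_\Por\dot Y_0=\{\dot g_\alpha\ /\ \alpha<\delta\}$, this gives $\Vdash_\Por f\not\sqsubset\dot Y_0$. By the choice of $\dot Y_0$ in $V^\Por$ we then have $\Vdash_\Por(\Vdash_{\Qnm}f\not\sqsubset\dot{h})$, which is exactly $\Vdash_{\Por\ast\Qnm}f\not\sqsubset\dot{h}$, as required. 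The only genuinely delicate point is the reflection of $\dot Y_0$ to a set of fewer than $\theta$ ground-model reals, and this is precisely where the $\theta$-cc and the regularity of $\theta$ enter; the rest is a routine concatenation of the two goodness properties.
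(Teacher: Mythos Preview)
The paper does not include its own proof of this lemma; it simply cites \cite[Lemma 6.4.11]{BarJu}. Your argument is correct and is essentially the standard proof found there: apply goodness of $\Qnm$ in $V^\Por$ to get a $\Por$-name $\dot Y_0$ for a small witness set, use $\theta$-cc and regularity of $\theta$ to replace $\dot Y_0$ by $<\theta$ many $\Por$-names for reals, then apply goodness of $\Por$ to each of these and take the union.
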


\begin{theorem}[Preservation of goodness in short direct limits]\label{PresGoodLimdir}
   Let $I$ be a directed partial order, $\langle\Por_i\rangle_{i\in I}$ a directed system and $\Por=\limdir_{i\in I}\Por_i$. If $|I|<\theta$ and $\Por_i$ is $\theta$-$\sqsubset$-good for any $i\in I$, then $\Por$ is $\theta$-$\sqsubset$-good.
\end{theorem}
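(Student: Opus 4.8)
The plan is to show that an arbitrary $\Por$-name $\dot h$ for a real in $\omega^\omega$ admits a witnessing set $Y\subseteq\omega^\omega$ of size $<\theta$ as in Definition \ref{DefGood}, by assembling $Y$ from witnesses supplied coordinatewise by the goodness of the $\Por_i$. First I would record the structural facts that make the bookkeeping work. Since $\Por_i\lessdot\Por$, any antichain of $\Por$ that happens to lie inside $\Por_i$ is already an antichain of $\Por_i$; combined with $|I|<\theta$, the regularity of $\theta$ and a pigeonhole over $I$, this shows that $\Por=\limdir_{i\in I}\Por_i$ is $\theta$-cc (we may assume each $\Por_i$ is $\theta$-cc, as is the case in the intended applications). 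Hence $\dot h$ has a nice name: for every $n$ there is a maximal antichain $A_n\subseteq\Por$ of size $<\theta$ deciding $\dot h(n)$, and $D:=\bigcup_{n<\omega}A_n$ has size $<\theta$, every condition of $D$ lying in some $\Por_i$.

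Next, for each $i\in I$ I would distill from $\dot h$ a $\Por_i$-name $\dot h_i$ recording the information about $\dot h$ already available in $\Por_i$ (for instance, reading off the value of $\dot h(n)$ whenever the generic for $\Por_i$ meets the part of $A_n$ lying inside $\Por_i$), apply the $\theta$-$\sqsubset$-goodness of $\Por_i$ to $\dot h_i$, and obtain a nonempty $Y_i\subseteq\omega^\omega$ of size $<\theta$ with the property that $f\not\sqsubset Y_i$ implies $\Vdash_{\Por_i}f\not\sqsubset\dot h_i$. Setting $Y:=\bigcup_{i\in I}Y_i$, the size bound $|Y|<\theta$ is immediate from $|I|<\theta$ and the regularity of $\theta$. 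This is the easy half, and it is exactly where the hypothesis $|I|<\theta$ is consumed.

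The crux, and the step I expect to be the main obstacle, is to verify that this single $Y$ works for $\Por$, i.e. that $f\not\sqsubset Y$ forces $f\not\sqsubset\dot h$. A condition forcing $f\sqsubset\dot h$ lies in one $\Por_{i_0}$, but the values of $\dot h$ relevant to $f\sqsubset_m\dot h$ may be decided by conditions of $D$ scattered cofinally through the system; this is the genuine $\cf=\omega$ difficulty, where a real appears only at the limit, so no coordinatewise name $\dot h_i$ literally agrees with $\dot h$ on a tail. The plan is to exploit that $\langle\sqsubset_n\rangle$ is increasing with closed, nowhere dense sections: ``$f\sqsubset_m\dot h$'' is a closed condition, refuted by a finite initial segment $(f\restriction k,\dot h\restriction k)\notin T_m$, and any such finite refutation, being forced by a single condition, already lives in some $\Por_i$. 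Turning the family of refutations supplied by the statements $\Vdash_{\Por_i}f\not\sqsubset\dot h_i$ into one refutation valid in $\Por$ — controlling the interplay between the growing refutation level $k$ and the coordinate $i$ needed to witness it, so that the process does not regress indefinitely — is the technical heart, and I would carry it out in the spirit of the Judah--Shelah preservation theorem for finite support iterations, of which this statement is precisely the abstracted limit step.
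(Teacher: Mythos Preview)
Your outline has the right skeleton --- define for each $i$ a $\Por_i$-name $\dot h_i$, harvest witnesses $Y_i$ by goodness, set $Y=\bigcup_{i\in I}Y_i$ --- but the specific construction of $\dot h_i$ you propose is where the gap lies, and the anxiety in your final paragraph about an ``indefinite regression'' is a symptom of it. (Also, the $\theta$-cc detour via nice names is extraneous: the theorem carries no chain-condition hypothesis, and the paper's proof uses none.)

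Your suggested $\dot h_i$, obtained by ``reading off $\dot h(n)$ whenever the $\Por_i$-generic meets the part of $A_n$ lying inside $\Por_i$'', does not retain a usable link to $\dot h$: the maximal antichain $A_n\subseteq\Por$ need not meet $\Por_i$ at all, and when it does not, $\dot h_i(n)$ is arbitrary. With that definition the statement $\Vdash_{\Por_i}f\not\sqsubset\dot h_i$ says nothing about $\dot h$, and there is no way to convert it into a refutation of $f\sqsubset_n\dot h$ in $\Por$ without chasing through other coordinates --- precisely the regression you are worried about.

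The missing idea is to build $\dot h_i$ through the quotient. In $V^{\Por_i}$ the name $\dot h$ is a $\Por/\Por_i$-name for a real; choose there a decreasing sequence $\langle p^i_m\rangle_{m<\omega}$ in $\Por/\Por_i$ with $p^i_m$ deciding $\dot h\frestr m$, and let $h_i\in\omega^\omega$ be the real so decided. Now the link is hardwired: for every $m$ there is a condition $p^i_m\in\Por/\Por_i$ forcing $\dot h\frestr m=h_i\frestr m$. With this $\dot h_i$ the ``crux'' dissolves into a single step. If some $p\in\Por$ forces $f\sqsubset_n\dot h$, pick $i$ with $p\in\Por_i$ and pass to $V^{\Por_i}$ with $p$ in the generic; then the entire quotient $\Por/\Por_i$ forces $f\sqsubset_n\dot h$. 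But $f\not\sqsubset Y_i$ gives $f\not\sqsubset_n h_i$, and closedness of $(\sqsubset_n)_f$ yields an $m$ with $[h_i\frestr m]\cap(\sqsubset_n)_f=\varnothing$, whence $p^i_m\Vdash_{\Por/\Por_i}f\not\sqsubset_n\dot h$ --- a contradiction. One coordinate $i$ suffices; there is no interplay to control.
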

\begin{proof}
   Let $\dot{h}$ be a $\Por$-name for a real in $\omega^\omega$. For $i\in I$, find a $\Por_i$-name for a real $\dot{h}_i$ and a sequence $\{\dot{p}^i_m\}_{m<\omega}$ of $\Por_i$-names that represents a decreasing sequence of conditions in $\Por/\Por_i$ such that $\Por_i$ forces that $\dot{p}^i_m\Vdash_{\Por/\Por_i}\dot{h}\frestr m=\dot{h}_i\frestr m$. For each $i\in I$ choose $Y_i\subseteq\omega^\omega$ of size $<\theta$ that witnesses goodness of $\Por_i$ for $\dot{h}_i$. As $|I|<\theta$, $Y=\bigcup_{i\in I}Y_i$ has size $<\theta$ by regularity of $\theta$.

   We prove that $Y$ witnesses goodness of $\Por$ for $\dot{h}$. Assume, towards a contradiction, that $f\in\omega$, $f\not\sqsubset Y$ and that there are $p\in\Por$ and $n<\omega$ such that $p\Vdash_\Por f\sqsubset_n\dot{h}$. Choose $i\in I$ such that $p\in\Por_i$. Let $G$ be $\Por_i$-generic over the ground model $V$ with $p\in G$. Then, by the choice of $Y_i$, $f\not\sqsubset h_i$, in particular, $f\not\sqsubset_n h_i$. As $C:=(\sqsubset_n)_f=\{g\in\omega^\omega\ /\ f\sqsubset_n g\}$ is closed, there is an $m<\omega$ such that $[h_i\frestr m]\cap C=\varnothing$. Thus, $p^i_m\Vdash_{\Por/\Por_i}[\dot{h}\frestr m]\cap C=\varnothing$, that is, $p^i_m\Vdash_{\Por/\Por_i}f\not\sqsubset_n\dot{h}$. On the other hand, by hypothesis, $\Vdash_{\Por/\Por_i}f\sqsubset_n\dot{h}$, a contradiction.
\end{proof}

\begin{corollary}[Judah and Shelah {\cite{JuSh-KunenMillerchart}}, Preservation of goodness in well ordered direct limits]\label{PresGoodwoLimdir}
   Let $\delta$ be a limit ordinal and $\{\Por_\alpha\}_{\alpha<\delta}$ be a sequence of posets such that, for $\alpha<\beta<\delta$, $\Por_\alpha\lessdot\Por_\beta$. If $\Por_\delta=\limdir_{\alpha<\delta}\Por_\alpha$ is $\theta$-cc and $\Por_\alpha$ is $\theta$-$\sqsubset$-good for any $\alpha<\delta$, then $\Por_\delta$ is $\theta$-$\sqsubset$-good.
\end{corollary}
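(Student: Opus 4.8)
The plan is to split on whether $\cf(\delta)<\theta$ or $\cf(\delta)\geq\theta$, reducing each case to a result already at hand: the short-direct-limit theorem in the first case, and reflection of names via the $\theta$-cc in the second.

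Suppose first $\cf(\delta)<\theta$. I would fix an increasing cofinal sequence $\langle\delta_\xi\rangle_{\xi<\cf(\delta)}$ in $\delta$ and pass to the subsystem $\langle\Por_{\delta_\xi}\rangle_{\xi<\cf(\delta)}$. Since the $\delta_\xi$ are cofinal and $\Por_\alpha\lessdot\Por_{\delta_\xi}$ (hence $\Por_\alpha\subseteq\Por_{\delta_\xi}$) whenever $\alpha\leq\delta_\xi$, one has $\bigcup_{\xi}\Por_{\delta_\xi}=\bigcup_{\alpha<\delta}\Por_\alpha=\Por_\delta$, so $\Por_\delta=\limdir_{\xi<\cf(\delta)}\Por_{\delta_\xi}$. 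This is a directed system indexed by a set of size $<\theta$, each member of which is $\theta$-$\sqsubset$-good by hypothesis; hence Theorem \ref{PresGoodLimdir} applies verbatim and yields that $\Por_\delta$ is $\theta$-$\sqsubset$-good. Note the $\theta$-cc hypothesis is not needed in this case.

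Now suppose $\cf(\delta)\geq\theta$. Given a $\Por_\delta$-name $\dot h$ for a real, I would first replace it by a nice name: using $\theta$-cc, for each $n<\omega$ fix a maximal antichain $A_n\subseteq\Por_\delta$ deciding $\dot h(n)$, with $|A_n|<\theta$; then $\bigcup_{n<\omega}A_n$ has size $<\theta$ since $\theta$ is regular and uncountable. As $\Por_\delta=\bigcup_{\alpha<\delta}\Por_\alpha$, every condition of $\bigcup_n A_n$ lies in some $\Por_\alpha$, and since there are fewer than $\theta\leq\cf(\delta)$ of them the supporting indices are bounded by some $\beta<\delta$, so $\bigcup_n A_n\subseteq\Por_\beta$. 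The resulting nice name $\dot h'$ is then a $\Por_\beta$-name forced equal to $\dot h$, and I would apply $\theta$-$\sqsubset$-goodness of $\Por_\beta$ to obtain a witness $Y$ of size $<\theta$ for $\dot h'$. Finally I would transfer $Y$ upward: since $\Por_\beta\lessdot\Por_\delta$ and each $\sqsubset_n$ is closed, the statement ``$f\not\sqsubset\dot h'$'' is absolute between the $\Por_\beta$- and $\Por_\delta$-extensions (as $\dot h'[G_\delta]=\dot h'[G_\delta\cap\Por_\beta]$), so $f\not\sqsubset Y$ forces $f\not\sqsubset\dot h$ over $\Por_\delta$ as well; thus $Y$ witnesses goodness of $\Por_\delta$ for $\dot h$.

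The step I expect to be the main obstacle is the claim, used in the second case, that $A_n$---a maximal antichain of $\Por_\delta$ that happens to lie inside $\Por_\beta$---is still a maximal antichain of $\Por_\beta$; this is what makes $\dot h'$ a genuine $\Por_\beta$-name for a real with $\Vdash_{\Por_\delta}\dot h=\dot h'$. I would verify it by a genericity argument: $A_n$ is an antichain in $\Por_\beta$ because compatibility in $\Por_\beta$ implies compatibility in $\Por_\delta$, and for maximality note that any $\Por_\beta$-generic $G$ extends to a $\Por_\delta$-generic $G'$ with $G'\cap\Por_\beta=G$, whence $G\cap A_n=G'\cap A_n\neq\varnothing$ by maximality of $A_n$ in $\Por_\delta$. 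The accompanying absoluteness transfer across $\Por_\beta\lessdot\Por_\delta$ is the same mechanism that silently underlies the proof of Theorem \ref{PresGoodLimdir}, and rests only on the closedness of the relations $\sqsubset_n$ fixed in Context \ref{ContextPres}.
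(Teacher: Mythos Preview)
Your proof is correct and follows exactly the same case split as the paper's own argument: reduce the case $\cf(\delta)<\theta$ to Theorem~\ref{PresGoodLimdir} via a cofinal subsequence, and in the case $\cf(\delta)\geq\theta$ use the $\theta$-cc to reflect the name $\dot h$ into some $\Por_\beta$ with $\beta<\delta$ and invoke goodness there. You have simply supplied the details (nice names, maximality of $A_n$ in $\Por_\beta$, and the absoluteness transfer) that the paper compresses into ``By $\theta$-cc, there is an $\alpha$ such that $\dot h$ is a $\Por_\alpha$-name'' and ``It is clear that $Y$ also witnesses goodness of $\Por_\delta$.''
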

\begin{proof}
   First assume that $\cf(\delta)<\theta$, so there is an increasing sequence $\{\alpha_\xi\}_{\xi<\cf(\delta)}$ that converges to $\delta$. Then, $\Por_\delta=\limdir_{\xi<\cf(\delta)}\Por_{\alpha_\xi}$, which implies that $\Por_\delta$ is $\theta$-$\sqsubset$-good by Theorem \ref{PresGoodLimdir}.

   Now, assume that $\cf(\delta)\geq\theta$. Let $\dot{h}$ be a $\Por_\delta$-name for a real. By $\theta$-cc, there is an $\alpha<\theta$ such that $\dot{h}$ is a $\Por_\alpha$-name. Then, by hypothesis, there is $Y\subseteq\omega^\omega$ of size $<\theta$ that witnesses goodness of $\Por_\alpha$ for $\dot{h}$. It is clear that $Y$ also witnesses goodness of $\Por_\delta$.
\end{proof}

\begin{corollary}[Judah and Shelah {\cite{JuSh-KunenMillerchart}}, Preservation of goodness in fsi {\cite[Lemma 6.4.12]{BarJu}}]\label{PresGoodFsi}
   Let $\Por_\delta=\langle\Por_\alpha,\Qnm_\alpha\rangle_{\alpha<\delta}$ be a fsi of $\theta$-cc forcing notions. If, for each $\alpha<\delta$, $\Por_\alpha$ forces that $\Qnm_\alpha$ is $\theta$-$\sqsubset$-good, then $\Por_\delta$ is $\theta$-$\sqsubset$-good.
\end{corollary}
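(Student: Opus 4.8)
The plan is to argue by induction on the length $\delta$ of the iteration, using the two-step result (Lemma \ref{PresGood2step}) at successor stages and the well-ordered direct limit result (Corollary \ref{PresGoodwoLimdir}) at limit stages. The base case $\delta=0$ is trivial, since the trivial poset is $\theta$-$\sqsubset$-good. Before starting the recursion, I would record the standing fact that a finite support iteration of $\theta$-cc posets is itself $\theta$-cc (for $\theta$ regular uncountable, via the usual $\Delta$-system argument), so that every initial segment $\Por_\alpha$ as well as $\Por_\delta$ itself is $\theta$-cc; this is exactly what is needed in order to invoke the two lemmas above.

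For the successor step $\delta=\alpha+1$, the inductive hypothesis gives that $\Por_\alpha$ is $\theta$-$\sqsubset$-good, and $\Por_\alpha$ is $\theta$-cc by the standing fact. Since by hypothesis $\Por_\alpha$ forces that $\Qnm_\alpha$ is $\theta$-$\sqsubset$-good, Lemma \ref{PresGood2step} applies directly to $\Por_{\alpha+1}=\Por_\alpha\ast\Qnm_\alpha$ and yields that it is $\theta$-$\sqsubset$-good. For the limit step, with $\delta$ a limit ordinal, the finite support construction gives $\Por_\delta=\limdir_{\alpha<\delta}\Por_\alpha$ with $\Por_\alpha\lessdot\Por_\beta$ for $\alpha<\beta<\delta$; by the inductive hypothesis each $\Por_\alpha$ ($\alpha<\delta$) is $\theta$-$\sqsubset$-good, and $\Por_\delta$ is $\theta$-cc by the standing fact, so Corollary \ref{PresGoodwoLimdir} applies and gives that $\Por_\delta$ is $\theta$-$\sqsubset$-good. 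This closes the induction.

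The content of the argument is carried entirely by the two cited results, so I do not expect a genuine obstacle in the goodness bookkeeping itself. The one point that requires care is the $\theta$-cc of the whole iteration and of its initial segments: this does \emph{not} follow from the $\theta$-$\sqsubset$-goodness of the iterands, but is the separate, standard preservation of the $\theta$-chain condition under finite support iterations (which is precisely where regularity and uncountability of $\theta$ enter). I would either cite this or take it as known; once it is in hand, the successor and limit steps are immediate applications of Lemma \ref{PresGood2step} and Corollary \ref{PresGoodwoLimdir} respectively.
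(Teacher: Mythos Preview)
Your proposal is correct and matches the paper's proof exactly: induction on the length of the iteration, with the trivial base case, Lemma~\ref{PresGood2step} at successor stages, and Corollary~\ref{PresGoodwoLimdir} at limit stages. Your explicit remark that the $\theta$-cc of each $\Por_\alpha$ must be secured separately (via the standard $\Delta$-system argument for fsi of $\theta$-cc posets) is a useful clarification that the paper leaves implicit.
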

\begin{proof}
   Prove by induction on $\alpha\leq\delta$ that $\Por_\alpha$ is $\theta$-$\sqsubset$-good. Step $\alpha=0$ is trivial, successor step comes from Lemma \ref{PresGood2step} and the limit step is a direct consequence of Corollary \ref{PresGoodwoLimdir}.
\end{proof}

Beyond the applications on fsi, Theorem \ref{PresGoodLimdir} can be applied to obtain goodness in template iterations, for example, see \cite[Thm. 4.13 and 4.15]{Me-TempIt}.

The following results show how to add $\sqsubset$-unbounded families with Cohen reals, in order to get values for $\bfrak_\sqsubset$ and $\dfrak_\sqsubset$.

\begin{lemma}\label{AddUnbGenIt}
   Let $\nu$ be an uncountable regular cardinal, $\langle\Por_\alpha\rangle_{\alpha<\nu}$ a $\lessdot$-increasing sequence of forcing notions and $\Por_\nu=\limdir_{\alpha<\nu}\Por_\alpha$. If
   \begin{enumerate}[(i)]
      \item for each $\alpha<\nu$, $\Por_{\alpha+1}$ adds a Cohen real over $V^{\Por_\alpha}$, and
      \item $\Por_\nu$ is ccc,
   \end{enumerate}
   then, $\Por_\nu$ adds a $\nu$-$\sqsubset$-unbounded family (of Cohen reals) of size $\nu$. Moreover, it forces $\bfrak_\sqsubset\leq\nu$ and $\nu\leq\dfrak_\sqsubset$.
\end{lemma}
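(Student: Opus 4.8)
The plan is to let $F$ be the family of Cohen reals added at the successor stages and to show it is $\nu$-$\sqsubset$-unbounded by reflecting any small set of reals into a proper initial segment of the iteration. Concretely, for each $\alpha<\nu$ I would fix a $\Por_{\alpha+1}$-name $\dot c_\alpha$ for a real Cohen over $V^{\Por_\alpha}$, as provided by (i), and set $F:=\{c_\alpha\ /\ \alpha<\nu\}$ in the $\Por_\nu$-extension. That $|F|=\nu$ is routine: for $\beta<\alpha$ one has $c_\beta\in V^{\Por_{\beta+1}}\subseteq V^{\Por_\alpha}$ whereas $c_\alpha$ is Cohen over $V^{\Por_\alpha}$, so the reals $c_\alpha$ are pairwise distinct.

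The heart of the argument, and the step I expect to be the main obstacle, is a reflection claim: in the $\Por_\nu$-extension every real already lies in $V^{\Por_\alpha}$ for some $\alpha<\nu$. I would prove this by the standard bookkeeping for direct limits: since $\Por_\nu=\bigcup_{\alpha<\nu}\Por_\alpha$, a nice name for a real is built from countably many maximal antichains (one per coordinate), each countable by (ii), so the name mentions only countably many conditions, each living in some $\Por_\beta$; regularity and uncountability of $\nu$ then bound the supremum of these indices strictly below $\nu$, and $\Por_\alpha\lessdot\Por_\nu$ guarantees the name computes the same real there. The delicate points are that this genuinely uses both the ccc hypothesis (ii) and the uncountable regularity of $\nu$, and that $\Por_\nu$ being ccc also keeps $\nu$ regular in the extension, so that small sets of indices remain bounded.

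With reflection in hand the unboundedness follows quickly. Working in $V[G]$ with $X\subseteq\omega^\omega$ of size $<\nu$, I would reflect each $x\in X$ to a stage $\alpha_x<\nu$, take $\alpha^*:=\sup\{\alpha_x\ /\ x\in X\}<\nu$ (bounded since $\nu$ is regular in $V[G]$), so that $X\subseteq V[G_{\alpha^*}]$ with $G_{\alpha^*}:=G\cap\Por_{\alpha^*}$. Then $c_{\alpha^*}$ is Cohen over $V[G_{\alpha^*}]$, and for each $g\in X$ the set $(\sqsubset)^g=\bigcup_{n<\omega}(\sqsubset_n)^g$ is an $F_\sigma$ meager set coded in $V[G_{\alpha^*}]$ (each $(\sqsubset_n)^g$ is closed nowhere dense by Context \ref{ContextPres}); since a Cohen real avoids every meager Borel set coded in its ground model, $c_{\alpha^*}\not\sqsubset g$ for all such $g$, i.e. $c_{\alpha^*}\not\sqsubset X$ with $c_{\alpha^*}\in F$. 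This is exactly Definition \ref{DefStrUnb} for $F$.

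Finally, the ``moreover'' clause is immediate: applying Lemma \ref{StrUnb-b leq} to the $\nu$-$\sqsubset$-unbounded family $F$ of size $\nu$ yields $\bfrak_\sqsubset\leq|F|=\nu$ and $\nu\leq\dfrak_\sqsubset$, and both are therefore forced by $\Por_\nu$.
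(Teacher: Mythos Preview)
Your proposal is correct and follows essentially the same approach as the paper: fix the Cohen reals $\dot c_\alpha$, reflect a small family of reals (via ccc and nice names) into some $\Por_\alpha$ with $\alpha<\nu$, and use that $(\sqsubset)^g$ is $F_\sigma$ meager so that the next Cohen real avoids it; the ``moreover'' is Lemma~\ref{StrUnb-b leq}. The only cosmetic differences are that you work in the extension (and hence invoke preservation of the regularity of $\nu$ under ccc) while the paper argues with names in $V$, and you add the verification that the $c_\alpha$ are pairwise distinct, which the paper leaves implicit.
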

\begin{proof}
   Let $\dot{c}_\alpha$ be a $\Por_{\alpha+1}$-name of a Cohen real over $V^{\Por_\alpha}$. Then, $\Por_\nu$ forces that $\{\dot{c}_\alpha\ /\ \alpha<\nu\}$ is a $\nu$-$\sqsubset$-unbounded family. Indeed, if $\{\dot{x}_\xi\}_{\xi<\mu}$ is a sequence of $\Por_\nu$-names for reals with $\mu<\nu$, by (ii) there is an $\alpha<\nu$ such that $\{\dot{x}_\xi\}_{\xi<\mu}$ is a sequence of $\Por_\alpha$-names, so $\Por_{\alpha+1}$ forces that $\dot{c}_\alpha\not\sqsubset\dot{x}_\xi$ for all $\xi<\mu$. This last assertion holds because $(\sqsubset)^g$ is an $F_\sigma$ meager set for any $g\in\omega^\omega$ (see Context \ref{ContextPres}).

   The second statement is a consequence of Lemma \ref{StrUnb-b leq}.
\end{proof}

\begin{lemma}\label{AddsmallUnb}
   Let $\delta\geq\theta$ be an ordinal and $\Por_\delta=\langle\Por_\alpha,\Qnm_\alpha\rangle_{\alpha<\delta}$ be a fsi such that,
   \begin{enumerate}[i)]
      \item for $\alpha<\theta$, $\Qnm_\alpha$ is forced (by $\Por_\alpha$) to be ccc and to have two incompatible conditions, and
      \item for $\theta\leq\alpha<\delta$, $\Qnm_\alpha$ is forced to be $\theta$-cc and  $\theta$-$\sqsubset$-good.
   \end{enumerate}
   Then,
   \begin{enumerate}[(a)]
      \item $\Por_\theta$ adds a $\theta$-$\sqsubset$-unbounded family (of Cohen reals) of size $\theta$.
      \item The family added in (a) is forced to be a $\theta$-$\sqsubset$-unbounded family by $\Por_\delta$. In particular, it forces that $\bfrak_\sqsubset\leq\theta\leq\dfrak_\sqsubset$.
   \end{enumerate}
\end{lemma}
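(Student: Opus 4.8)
The plan is to prove (a) by a re-indexing that brings Lemma \ref{AddUnbGenIt} into play, and then to obtain (b) by factoring the iteration at stage $\theta$ and invoking the preservation of goodness for the tail.

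For (a), first note that $\Por_\theta$ is ccc, being a finite support iteration of the ccc iterands $\Qnm_\alpha$ ($\alpha<\theta$) from hypothesis (i). Next I would produce an auxiliary cofinal sequence along which Cohen reals appear explicitly: define an increasing continuous sequence $\langle\gamma_\xi\rangle_{\xi<\theta}$ cofinal in $\theta$ by $\gamma_0=0$, $\gamma_{\xi+1}=\gamma_\xi+\omega$, and $\gamma_\xi=\sup_{\eta<\xi}\gamma_\eta$ at limits; since $\theta$ is regular and uncountable, each $\gamma_\xi<\theta$ and $\sup_{\xi<\theta}\gamma_\xi=\theta$. Because the iteration has finite support, $\Por_\theta=\limdir_{\xi<\theta}\Por_{\gamma_\xi}$ and the sequence $\langle\Por_{\gamma_\xi}\rangle_{\xi<\theta}$ is $\lessdot$-increasing. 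The key point is that for each $\xi$ the quotient $\Por_{\gamma_{\xi+1}}/\Por_{\gamma_\xi}$ is, in $V^{\Por_{\gamma_\xi}}$, a finite support iteration of length $\omega$ of nontrivial forcings (hypothesis (i) guarantees two incompatible conditions at each step), hence it adds a Cohen real over $V^{\Por_{\gamma_\xi}}$ by the classical fact that such iterations add Cohen reals. Thus $\langle\Por_{\gamma_\xi}\rangle_{\xi<\theta}$ satisfies hypotheses (i) and (ii) of Lemma \ref{AddUnbGenIt} with $\nu=\theta$, and that lemma yields a $\theta$-$\sqsubset$-unbounded family $\dot F=\{\dot c_\xi\ /\ \xi<\theta\}$ of Cohen reals of size $\theta$ added by $\Por_\theta$.

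For (b), I would use the standard factorization $\Por_\delta\simeq\Por_\theta\ast\dot{\Por}_{[\theta,\delta)}$, where $\dot{\Por}_{[\theta,\delta)}$ is the $\Por_\theta$-name for the finite support iteration of the tail $\langle\Qnm_\alpha\rangle_{\theta\leq\alpha<\delta}$. By hypothesis (ii) each iterand of this tail is $\theta$-cc and $\theta$-$\sqsubset$-good, so Corollary \ref{PresGoodFsi} shows that $\Por_\theta$ forces $\dot{\Por}_{[\theta,\delta)}$ to be $\theta$-$\sqsubset$-good. Working in $V^{\Por_\theta}$, the family $F$ from (a) is $\theta$-$\sqsubset$-unbounded, so by Lemma \ref{GoodPresStrUnb}(a) the $\theta$-$\sqsubset$-good poset $\dot{\Por}_{[\theta,\delta)}$ preserves this, i.e. $F$ remains $\theta$-$\sqsubset$-unbounded in $V^{\Por_\delta}$. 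Finally, since $|F|=\theta$, Lemma \ref{StrUnb-b leq} gives $\bfrak_\sqsubset\leq\theta\leq\dfrak_\sqsubset$ in the $\Por_\delta$-extension.

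The main obstacle is the classical fact invoked in (a): that a finite support iteration of length $\omega$ of forcings each having two incompatible conditions adds a Cohen real over the ground model. The argument is a genericity/density computation exploiting finite supports---given a ground-model dense subset of $2^{<\omega}$ and a condition, one uses coordinates beyond the (finite) support to freely steer the canonically defined binary real into the dense set via the incompatible conditions. Everything else is routine assembly: verifying that $\Por_\theta$ is ccc and equals the direct limit along $\langle\gamma_\xi\rangle_{\xi<\theta}$, checking the factorization of the finite support iteration at stage $\theta$, and citing the already-established preservation results.
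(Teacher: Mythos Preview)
Your proof is correct and follows essentially the same approach as the paper: for (a) you invoke Lemma \ref{AddUnbGenIt} after reindexing along a cofinal $\theta$-sequence so that Cohen reals appear between consecutive stages, and for (b) you factor at stage $\theta$ and apply Corollary \ref{PresGoodFsi} and Lemma \ref{GoodPresStrUnb} to the tail. The only difference is cosmetic: the paper simply appeals to the folklore fact that a fsi of nontrivial ccc posets adds Cohen reals at limit stages, whereas you spell out the reindexing $\gamma_{\xi+1}=\gamma_\xi+\omega$ and the density argument explicitly.
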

\begin{proof}
   \begin{enumerate}[(a)]
      \item This is a direct consequence of Lemma \ref{AddUnbGenIt} and the fact that this iteration adds Cohen reals at limit stages.
      \item Let $\dot{C}$ be a $\Por_\theta$-name for a family of reals as in (a). Step in $V_\theta$. Note that $\Por_\delta/\Por_\theta$ is equivalent to the fsi $\langle\Por_\alpha/\Por_\theta,\Qnm_\alpha\rangle_{\theta\leq\alpha<\delta}$. Thus, by Corollary \ref{PresGoodFsi}, $\Por_\delta/\Por_\theta$ is $\theta$-$\sqsubset$-good. Hence, by Lemma \ref{GoodPresStrUnb}, it forces that $C$ is $\theta$-$\sqsubset$-unbounded.
   \end{enumerate}
\end{proof}

\begin{example}
   Baumgartner and Dordal \cite{BaumDor-Dom} proved that it is consistent that $\sfrak<\bfrak$. This is done in the following way. Fix a regular cardinal $\mu>\aleph_1$ and let $\Por$ be the poset resulting by a fsi of length $\mu$ of Hechler forcing. This adds a scale of length $\mu$, so $\Por$ forces $\bfrak=\dfrak=\mu$. On the other hand, $\Por$ is $\propto$-good because of Example \ref{ExGood}(2) and Corollary \ref{PresGoodFsi} and, by Lemma \ref{AddsmallUnb}, an $\aleph_1$-$\propto$-unbounded family is added at the $\omega_1$ stage of the iteration and it is preserved until the final extension, so $\Por$ forces that $\sfrak=\aleph_1$.

   If $\nu<\mu$ is an uncountable regular cardinal, the construction of the iteration can be modified in order to produce a $\mu$-$\propto$-good poset which forces that any family of size $<\nu$ of infinite subsets of $\omega$ has an $\propto$-upper bound, this by a good keeping argument using Mathias forcing with filter bases of size $<\nu$ (Example \ref{ExGood}(4) is also used for this).
\end{example}

From now on in this section, fix $M\subseteq N$ transitive models of $\thzfc$. We discuss a property of preserving unbounded reals over $M$ along parallel iterations from $M$ and $N$. The remaining results of this section are based on \cite{BlSh-MatrIt}, \cite{BrFi-MatrIt} and \cite{Me-MatIt}.

Consider $\sqsubset$ from Context \ref{ContextPres} with parameters in $M$ and fix $c\in N$ a $\sqsubset$-unbounded real over $M$. As Cohen reals over $M$ that belong to $N$ are $\sqsubset$-unbounded over $M$, typically $c$ is such a real.

\begin{definition}\label{DefPresUnbreal}
   Let $\Por\in M$ and $\Qor\in N$ be posets such that $\Por\lessdot_M\Qor$. Consider the property
   \[(\star,\Por,\Qor,M,N,\sqsubset,c):\ \ \ \textrm{for every $\dot{h}\in M$ $\Por$-name for a real, $\Vdash_{\Qor,N}c\not\sqsubset\dot{h}$.}\]
   This means that $c$ is forced by $\Qor$ (in $N$) to be $\sqsubset$-unbounded over $M^\Por$.
\end{definition}

As an example, we have

\begin{lemma}\label{PresUnbrealEx}
   \begin{enumerate}[(a)]
      \item (\cite[Thm. 7]{Me-MatIt}) Let $\Sor$ be a Suslin ccc poset with parameters in $M$. If $\Sor$ is $\sqsubset$-good in $M$, then $(\star,\Sor^M,\Sor^N,M,N,\sqsubset,c)$ holds.
      \item (\cite[Lemma 11]{BrFi-MatrIt}) Let $\Por\in M$ be a poset. Then, $(\star,\Por,\Por,M,N,\sqsubset,c)$ holds.
   \end{enumerate}
\end{lemma}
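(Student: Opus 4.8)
The two parts are proved by genuinely different arguments, according to whether the forcing is enlarged when passing from $M$ to $N$. In (b) the poset is the \emph{same} set $\Por\in M$ in both models, so any $G$ that is $\Por$-generic over $N$ is automatically $\Por$-generic over $M$ (every dense subset of $\Por$ lying in $M$ also lies in $N$); here no definability is needed and $\Por$ may be arbitrary. In (a) the poset $\Sor^N$ has strictly more conditions than $\Sor^M$, contributed by the new reals of $N$, and it is the goodness of $\Sor$, made transferable by Suslinity, that controls the reals these conditions add. I would prove (b) by a direct absoluteness-of-wellfoundedness argument and (a) by transporting the goodness witness from $M$ to $N$.

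For (b), suppose towards a contradiction that some $p\in\Por$ and $n<\omega$ satisfy $p\Vdash_{\Por,N}c\sqsubset_n\dot h$. Since $\sqsubset_n$ is a closed relation (Context \ref{ContextPres}), fix in $M$ a tree $S_n\subseteq(\omega\times\omega)^{<\omega}$ with $\sqsubset_n=[S_n]$. The crucial observation is that each $c\frestr k$ is a finite sequence and hence lies in $M$, so the sets $A_k:=\{t\in\omega^k\ /\ (c\frestr k,t)\in S_n\}$ and the tree $T^\ast:=\bigcup_{k<\omega}A_k$ all belong to $M$, even though $c\notin M$. Working in $N$, I would then build a decreasing chain $p=p_0\geq p_1\geq\cdots$ in $\Por$ together with $t_k\in\omega^k$ so that $p_k\Vdash\dot h\frestr k=t_k$; since $p_k\leq p$ forces $c\sqsubset_n\dot h$, the decided initial segment satisfies $(c\frestr k,t_k)\in S_n$, that is $t_k\in A_k$, and the $t_k$ cohere, so $g:=\bigcup_{k<\omega}t_k\in N$ is a branch of $T^\ast$. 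Thus $T^\ast$ is ill-founded in $N$, and as ill-foundedness is absolute between transitive models, $T^\ast$ is ill-founded already in $M$. Any branch $g'\in M$ of $T^\ast$ then satisfies $(c,g')\in[S_n]=\sqsubset_n$, i.e. $c\sqsubset g'$ with $g'\in\omega^\omega\cap M$, contradicting that $c$ is $\sqsubset$-unbounded over $M$.

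For (a), I would first invoke goodness of $\Sor$ \emph{in $M$}: by Definition \ref{DefGood} there is a countable $Y\in M$, $Y\subseteq\omega^\omega$, such that $M$ satisfies ``for every $f\in\omega^\omega$, if $f\not\sqsubset Y$ then $\Vdash_\Sor f\not\sqsubset\dot h$''. Since $Y\subseteq M$ and $c$ is $\sqsubset$-unbounded over $M$, we have $c\not\sqsubset Y$. It remains to transport the displayed $M$-statement to $N$ and apply it to $f=c$. This is exactly where Suslinity enters: because $\Sor$ is Suslin ccc, its maximal antichains are countable, which both yields $\Sor^M\lessdot_M\Sor^N$ (the prerequisite of $(\star)$) and lets the name $\dot h$ be coded by a real of $M$; consequently ``$\forall f\,(f\not\sqsubset Y\to\Vdash_\Sor f\not\sqsubset\dot h)$'' becomes a projective statement in the codes of $Y$, $\dot h$, $\Sor$ and $\sqsubset$, all of which lie in $M$. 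By Shoenfield absoluteness this statement holds in $N$ as well, and applying it to the real $c\in N$ (legitimate since $c\not\sqsubset Y$) gives $\Vdash_{\Sor^N,N}c\not\sqsubset\dot h$, as required.

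The main obstacle differs between the two parts. In (b) it is conceptual: one must resist trying to show that $c$ remains $\sqsubset$-unbounded over the \emph{larger} model $M[G]$ (which can fail for general $\Por$) and instead manufacture the contradiction against unboundedness over $M$ itself, using that the auxiliary tree $T^\ast$ is built from the finite restrictions $c\frestr k\in M$ and that its ill-foundedness is absolute. In (a) the delicate point is the passage from the forcing relation of $\Sor^M$ in $M$ to that of $\Sor^N$ in $N$, together with the fact that the external real $c$ must be fed into a property originally quantified only over reals of $M$; both are licensed precisely by the definability of $\Sor$, which is why the Suslin hypothesis — not merely goodness — is indispensable in (a).
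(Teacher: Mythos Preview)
The paper does not supply a proof of this lemma (it just cites external references), so there is nothing to compare your argument \emph{against}; I can only assess correctness. Your argument for (a) is the standard one and is fine: Suslinity makes the forcing relation and the name $\dot h$ coded by reals of $M$, so the goodness clause becomes a projective sentence with parameters in $M$ and transfers to $N$ by Shoenfield, where it can be instantiated at $c$.

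Your argument for (b), however, has a real gap. You assert that the tree $T^\ast=\bigcup_{k<\omega}A_k$ lies in $M$ because each $c\frestr k$ is hereditarily finite and hence each $A_k\in M$. That last inference fails: while every \emph{individual} $A_k$ is a finite set and so belongs to $M$, forming the union $T^\ast$ requires the whole sequence $\langle A_k\rangle_{k<\omega}$, which is the same information as $\langle c\frestr k\rangle_{k<\omega}$, i.e.\ as $c$ itself. Concretely, take $\sqsubset_n$ to be $=^\ast_n$ from Example~\ref{ExBdRelation}(3); then $T^\ast$ is exactly the tree of finite sequences agreeing with $c$ from coordinate $n$ on, and $T^\ast\in M$ would immediately give $c\in M$. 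So the absoluteness-of-illfoundedness step cannot be applied to $T^\ast$.

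The repair is to move the construction of the decreasing chain from $N$ into $M$. Since $\Por\in M$, $p\in M$, and $\dot h\in M$, you can in $M$ choose $p=p_0\geq p_1\geq\cdots$ with $p_k$ deciding $\dot h\frestr k$, obtaining $g\in\omega^\omega\cap M$ with $p_k\Vdash_{\Por,M}\dot h\frestr k=g\frestr k$. These atomic forcing facts are absolute to $N$ (same poset, same name), so $p_k\Vdash_{\Por,N}\dot h\frestr k=g\frestr k$. Now $g\in M$ gives $c\not\sqsubset_n g$, hence for some $k$ the basic open set $[g\frestr k]$ misses $(\sqsubset_n)_c$, and then $p_k\leq p$ forces $c\not\sqsubset_n\dot h$ in $N$, the desired contradiction. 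This is exactly the pattern used later in the proof of Theorem~\ref{PresUnbrealLimit}; no auxiliary tree or wellfoundedness argument is needed.
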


\begin{lemma}\label{PresUnbreal2step}
   Let $\Por\in M$, $\Por'\in N$ posets such that $(\star,\Por,\Por',M,N,\sqsubset,c)$ holds. Also, let $\Qnm\in M$ be a $\Por$-name of a poset and $\Qnm'\in N$ a $\Por'$-name of a poset such that $\Por'$ forces (with respect to $N$) that $(\star,\Qnm,\Qnm',M^\Por,N^{\Por'},\sqsubset,c)$. Then $(\star,\Por\ast\Qnm,\Por'\ast\Qnm',M,N,\sqsubset,c)$ holds.
\end{lemma}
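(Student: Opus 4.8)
The plan is to reduce everything to the two-step embedding result together with the second hypothesis applied inside $N^{\Por'}$. First I would check that the conclusion is even well-formed, i.e. that $\Por\ast\Qnm\lessdot_M\Por'\ast\Qnm'$. Since $(\star,\Por,\Por',M,N,\sqsubset,c)$ presupposes $\Por\lessdot_M\Por'$, and since the property $(\star,\Qnm,\Qnm',M^\Por,N^{\Por'},\sqsubset,c)$ forced by $\Por'$ presupposes that $\Por'$ forces $\Qnm\lessdot_{M^\Por}\Qnm'$, Lemma \ref{2stepitemb} applies verbatim and yields $\Por\ast\Qnm\lessdot_M\Por'\ast\Qnm'$. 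Hence a $(\Por'\ast\Qnm')$-generic over $N$ does induce a $(\Por\ast\Qnm)$-generic over $M$, and the statement $(\star,\Por\ast\Qnm,\Por'\ast\Qnm',M,N,\sqsubset,c)$ is meaningful.

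For the preservation content, let $\dot h\in M$ be a $(\Por\ast\Qnm)$-name for a real. I would rewrite it, inside $M$, as a $\Por$-name $\tau$ for a $\Qnm$-name for a real, so that for every $(\Por\ast\Qnm)$-generic $G\ast H$ over $M$ one has $\dot h^{G\ast H}=(\tau^G)^H$. Now let $G'$ be $\Por'$-generic over $N$ and put $G:=\Por\cap G'$, the induced $\Por$-generic over $M$. Passing to $N[G']$, the second hypothesis gives that $(\star,\Qnm^G,\Qnm'[G'],M[G],N[G'],\sqsubset,c)$ holds, while $\tau^G\in M[G]=M^\Por$ is a $\Qnm^G$-name for a real. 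Applying the property to this name yields $\Vdash_{\Qnm'[G'],N[G']}c\not\sqsubset\tau^G$. Since $G'$ was arbitrary, and since for a $(\Por'\ast\Qnm')$-generic $G'\ast H'$ over $N$ the induced generic on $M$ decomposes as $G\ast H$ with $H=\Qnm^G\cap H'$ and $\dot h^{G\ast H}=(\tau^G)^H$, this says exactly $\Vdash_{\Por'\ast\Qnm',N}c\not\sqsubset\dot h$, as required.

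The main obstacle is purely the bookkeeping of names and generics: that a $(\Por\ast\Qnm)$-name for a real can be presented as a $\Por$-name $\tau$ for a $\Qnm$-name for a real, and, crucially, that the interpretations commute correctly through the two-step embedding, i.e. that the generic induced on $M$ by a $(\Por'\ast\Qnm')$-generic over $N$ decomposes as $G\ast H$ with $G=\Por\cap G'$ and $H=\Qnm^G\cap H'$. This is precisely where $\Por\ast\Qnm\lessdot_M\Por'\ast\Qnm'$ and the correctness supplied by Lemma \ref{2stepitemb} are used. One also has to note that $c\in N$ is fixed throughout and that $\sqsubset$, being a closed relation coded by a real of $M$, is absolute, so the ``$c\not\sqsubset$'' assertions transfer unambiguously between $N[G']$ and $N[G'][H']$. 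Once this is set up, the single application of the hypothesis inside $N^{\Por'}$ finishes the argument with no further estimate needed.
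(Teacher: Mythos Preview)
Your proposal is correct and follows essentially the same route as the paper: invoke Lemma \ref{2stepitemb} to obtain $\Por\ast\Qnm\lessdot_M\Por'\ast\Qnm'$, then pass to $N^{\Por'}$ and apply the second hypothesis to the $\Qnm$-name obtained from $\dot h$. The paper's proof is a two-line version of exactly this; your write-up simply makes the name/generic bookkeeping explicit, which the paper leaves implicit.
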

\begin{proof}
   From Lemma \ref{2stepitemb} it is clear that $\Por\ast\Qnm\lessdot_M\Por'\ast\Qnm'$. $(\star,\Por,\Por',M,N,\sqsubset,c)$ indicates that $\Vdash_{\Por',N}c\not\sqsubset M^\Por$ and, as it forces $(\star,\Qnm,\Qnm',M^\Por,N^{\Por'},\sqsubset,c)$, then $\Vdash_{\Por',N}$``$\Vdash_{\Qnm',N^{\Por'}}c\not\sqsubset M^{\Por\ast\Qnm'}$".
\end{proof}

The following result is a generalization of the corresponding fact (originally proved by Blass and Shelah \cite{BlSh-MatrIt}) for finite support iterations (Corollary \ref{PresUnbrealFsi}). The proof is almost the same (see, for example, \cite[Lemma 12]{BrFi-MatrIt}).

\begin{theorem}\label{PresUnbrealLimit}
   Let $I\in M$ be a directed set, $\langle\Por_i\rangle_{i\in I}\in M$ and $\langle\Qor_i\rangle_{i\in I}\in N$ directed systems of posets such that
   \begin{enumerate}[(i)]
      \item for each $i\in I$, $(\star,\Por_i,\Qor_i,M,N,\sqsubset,c)$ holds and
      \item whenever $i\leq j$, $\langle\Por_i,\Por_j,\Qor_i,\Qor_j\rangle$ is a correct system with respect to $M$
   \end{enumerate}
   Then, $(\star,\Por,\Qor,M,N,\sqsubset,c)$ where $\Por:=\limdir_{i\in I}\Por_i$ and $\Qor:=\limdir_{i\in I}\Qor_i$. Moreover, for any $i\in I$, $\langle\Por_i,\Por,\Qor_i,\Qor\rangle$ is a correct system with respect to $M$.
\end{theorem}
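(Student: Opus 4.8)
The plan is to prove the two assertions of Theorem \ref{PresUnbrealLimit} in order: first the correctness claim, then the preservation property $(\star)$. The correctness part is essentially immediate from the machinery already developed. By Lemma \ref{dirlimEmb}, hypotheses (i) and (ii) (noting that (i) gives in particular $\Por_i\lessdot_M\Qor_i$) yield that $\Por=\limdir_{i\in I}\Por_i$ is a complete suborder of $\Qor=\limdir_{i\in I}\Qor_i$ with respect to $M$, and that for each $i\in I$ the system $\langle\Por_i,\Por,\Qor_i,\Qor\rangle$ is correct with respect to $M$. This directly disposes of the ``moreover'' clause and, in particular, establishes $\Por\lessdot_M\Qor$, which is the structural prerequisite for the statement $(\star,\Por,\Qor,M,N,\sqsubset,c)$ to make sense.

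\textbf{Reducing the preservation property to a single coordinate.}

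The core of the argument is to show $(\star,\Por,\Qor,M,N,\sqsubset,c)$, i.e., that for every $\Por$-name $\dot{h}\in M$ for a real, $\Vdash_{\Qor,N}c\not\sqsubset\dot{h}$. First I would work in $N$ and suppose toward a contradiction that some $q\in\Qor$ and some $n<\omega$ force $c\sqsubset_n\dot{h}$. The key reduction exploits the direct-limit structure on the $\Por$-side: since $\dot{h}\in M$ is a name for a real and $\Por=\bigcup_{i\in I}\Por_i$, a density/fusion argument in $M$ produces, for each coordinate, a way to read off approximations $\dot{h}\frestr m$ from conditions in some $\Por_i$. Concretely, I would fix $i\in I$ large enough that $q\in\Qor_i$ (possible because $\Qor$ is the direct limit) and arrange, using correctness of $\langle\Por_i,\Por,\Qor_i,\Qor\rangle$ with respect to $M$, to capture the relevant values of $\dot{h}$ through $\Por_i$-names. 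The goal is to manufacture from $\dot{h}$ a $\Por_i$-name $\dot{h}_i\in M$ for a real together with names for a decreasing sequence of conditions in $\Por/\Por_i$ deciding longer and longer initial segments of $\dot{h}$, exactly as in the proof of Theorem \ref{PresGoodLimdir}.

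\textbf{Applying hypothesis (i) at coordinate $i$ and deriving the contradiction.}

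Once $\dot{h}_i$ is in hand, hypothesis (i) gives $(\star,\Por_i,\Qor_i,M,N,\sqsubset,c)$, so $\Vdash_{\Qor_i,N}c\not\sqsubset\dot{h}_i$; in particular no extension of $q$ in $\Qor_i$ can force $c\sqsubset_n\dot{h}_i$. The contradiction is then extracted from the closedness of the relations $\sqsubset_n$ (Context \ref{ContextPres}): passing to a $\Qor_i$-generic filter containing a suitable reduction of $q$, one has $c\not\sqsubset_n h_i$, so by closedness some finite initial segment $[h_i\frestr m]$ is disjoint from the section $(\sqsubset_n)_c=\{g\mid c\sqsubset_n g\}$; the approximating condition $p^i_m$ in $\Por/\Por_i$ then forces $[\dot h\frestr m]$ into that same initial segment, whence $c\not\sqsubset_n\dot{h}$, contradicting the assumption that $q$ forces $c\sqsubset_n\dot{h}$.

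\textbf{The main obstacle.}

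The genuinely delicate step is the reduction in the second paragraph: transferring a $\Por$-name $\dot{h}$ to a $\Por_i$-name $\dot{h}_i$ together with the sequence $\{p^i_m\}_{m<\omega}$ of conditions in the quotient $\Por/\Por_i$, all inside $M$, and then ensuring that the condition $q\in\Qor_i$ interacts correctly with these across the embedding. This is where correctness with respect to $M$ (hypothesis (ii), as packaged by Lemma \ref{dirlimEmb}) and the quotient machinery of Lemma \ref{QuotEmb} and Lemma \ref{dirlimquot} must be invoked carefully, since the section $(\sqsubset_n)_c$ lives in $N$ while the name $\dot{h}_i$ and its approximations must be arranged in $M$. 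I expect everything downstream of this reduction to follow the template of Theorem \ref{PresGoodLimdir} almost verbatim; the work is in aligning the two parallel systems so that the single-coordinate instance of $(\star)$ can be applied.
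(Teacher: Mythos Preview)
Your proposal is correct and follows essentially the same approach as the paper's proof. The paper likewise invokes Lemma \ref{dirlimEmb} for the correctness clause, assumes for contradiction that some $q\in\Qor$ and $n$ force $c\sqsubset_n\dot{h}$, fixes $i$ with $q\in\Qor_i$ (so no reduction of $q$ is needed---you can simply take a $\Qor_i$-generic $G$ over $N$ with $q\in G$), constructs the approximating real $g$ and the decreasing chain $\{p_k\}$ in $\Por/\Por_i$ inside $M[G\cap\Por_i]$, applies hypothesis (i) to get $c\not\sqsubset g$, and finishes via closedness of $\sqsubset_n$ together with Lemma \ref{QuotEmb} exactly as you outline; Lemma \ref{dirlimquot} is not actually needed.
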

\begin{proof}
   By Lemma \ref{dirlimEmb}, it is enough to prove that, if \ \!$\dot{h}\in M$ is a $\Por$-name for a real in $\omega^\omega$, then $\Vdash_{\Qor,N}c\not\sqsubset\dot{h}$. Assume, towards a contradiction, that there are $q\in\Qor$ and $n<\omega$ such that $q\Vdash_{\Qor,N}c\sqsubset_n\dot{h}$. Choose $i\in I$ such that $q\in\Qor_i$.

   Let $G$ be $\Qor_i$-generic over $N$ with $q\in G$. By assumption, $\Vdash_{\Qor/\Qor_i,N[G]}c\sqsubset_n\dot{h}$. In $M[G\cap\Por]$, find $g\in\omega^\omega$ and a decreasing chain $\{p_k\}_{k<\omega}$ in $\Por/\Por_i$ such that $p_k\Vdash_{\Por/\Por_i,M[G\cap\Por]}\dot{h}\frestr k=g\frestr k$. In $N[G]$, by hypothesis, $c\not\sqsubset g$, so there is a $k<\omega$ such that $[g\frestr k]\cap(\sqsubset_n)_c=\varnothing$. Then, as $\Por/\Por_i\lessdot_{M[G\cap\Por]}\Qor/\Qor_i$ by Lemma \ref{QuotEmb}, $p_k\Vdash_{\Qor/\Qor_i,N[G]}[\dot{h}\frestr k]\cap(\sqsubset_n)_c=\varnothing$, that is, $p_k\Vdash_{\Qor/\Qor_i,N[G]} c\not\sqsubset_n\dot{h}$, which is a contradiction.
\end{proof}

\begin{corollary}[Blass and Shelah {\cite{BlSh-MatrIt}}]\label{PresUnbrealFsi}
   Let $\Por_\delta=\langle\Por_\alpha,\Qnm_\alpha\rangle$ be a fsi in $M$ and $\Por'_\delta=\langle\Por'_\alpha,\Qnm'_\alpha\rangle$ a fsi in $N$. Assume that, for any $\alpha<\delta$, if $\Por_\alpha\lessdot_M\Por'_\alpha$ and $\Por'_\alpha$ forces (in $N$) $(\star,\Qnm_\alpha,\Qnm'_\alpha,M^{\Por_\alpha},N^{\Por'_\alpha},\sqsubset,c)$. Then, $(\star,\Por_\alpha,\Por'_\alpha,M,N,\sqsubset,c)$ holds for any $\alpha\leq\delta$.
\end{corollary}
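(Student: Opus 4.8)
The plan is to prove, by induction on $\beta\leq\delta$, the conjunction of two statements: that $(\star,\Por_\beta,\Por'_\beta,M,N,\sqsubset,c)$ holds, and that for every $\alpha\leq\beta$ the system $\langle\Por_\alpha,\Por_\beta,\Por'_\alpha,\Por'_\beta\rangle$ is correct with respect to $M$. The second clause is dragged along the induction only because it is precisely the hypothesis (ii) that Theorem \ref{PresUnbrealLimit} demands at limit stages; the first clause is the real content. Recall that, since both iterations have finite support, $\Por_\beta=\limdir_{\alpha<\beta}\Por_\alpha$ and $\Por'_\beta=\limdir_{\alpha<\beta}\Por'_\alpha$ at every limit $\beta$, which is exactly the shape Theorem \ref{PresUnbrealLimit} is built to handle.

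For the base case $\beta=0$ the posets are trivial, so a $\Por_0$-name for a real is just a real of $M$ and $(\star,\Por_0,\Por'_0,M,N,\sqsubset,c)$ reduces to the standing assumption that $c$ is $\sqsubset$-unbounded over $M$ (equivalently, one may invoke Lemma \ref{PresUnbrealEx}(b)); the correctness clause is vacuous. At a successor $\beta=\gamma+1$ we have $\Por_\beta=\Por_\gamma\ast\Qnm_\gamma$ and $\Por'_\beta=\Por'_\gamma\ast\Qnm'_\gamma$. The inductive hypothesis supplies $(\star,\Por_\gamma,\Por'_\gamma,\dots)$ and the corollary's assumption supplies that $\Por'_\gamma$ forces $(\star,\Qnm_\gamma,\Qnm'_\gamma,M^{\Por_\gamma},N^{\Por'_\gamma},\sqsubset,c)$, so Lemma \ref{PresUnbreal2step} yields $(\star,\Por_\beta,\Por'_\beta,\dots)$; at the same time Lemma \ref{2stepitemb} gives correctness of $\langle\Por_\gamma,\Por_\beta,\Por'_\gamma,\Por'_\beta\rangle$ with respect to $M$.

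The limit case is where Theorem \ref{PresUnbrealLimit} does the work. Taking $I=\beta$ (a directed set lying in $M$), $\Por_i:=\Por_\alpha$ and $\Qor_i:=\Por'_\alpha$, hypothesis (i) is the $(\star)$-clause of the inductive hypothesis and hypothesis (ii) is its correctness clause for all pairs strictly below $\beta$. The theorem then delivers $(\star,\Por_\beta,\Por'_\beta,\dots)$, and its ``Moreover'' conclusion delivers correctness of $\langle\Por_\alpha,\Por_\beta,\Por'_\alpha,\Por'_\beta\rangle$ for each $\alpha<\beta$, so both clauses of the invariant are restored at $\beta$.

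The one step that needs genuine care is restoring the full correctness clause at a successor $\beta=\gamma+1$ for indices $\alpha<\gamma$, since Lemma \ref{2stepitemb} only hands over the single system with lower index $\gamma$; I expect this to be the main obstacle. It is resolved by composing correct systems through the quotient: from the inductive correctness of $\langle\Por_\alpha,\Por_\gamma,\Por'_\alpha,\Por'_\gamma\rangle$ and the freshly obtained $\langle\Por_\gamma,\Por_\beta,\Por'_\gamma,\Por'_\beta\rangle$, Lemma \ref{QuotEmb} gives $\Por'_\alpha\Vdash\Por_\gamma/\Por_\alpha\lessdot_{M^{\Por_\alpha}}\Por'_\gamma/\Por'_\alpha$ and $\Por'_\gamma\Vdash\Por_\beta/\Por_\gamma\lessdot_{M^{\Por_\gamma}}\Por'_\beta/\Por'_\gamma$; applying Lemma \ref{2stepitemb} inside $V^{\Por'_\alpha}$ to the identifications $\Por_\beta/\Por_\alpha\simeq(\Por_\gamma/\Por_\alpha)\ast(\Por_\beta/\Por_\gamma)$ and $\Por'_\beta/\Por'_\alpha\simeq(\Por'_\gamma/\Por'_\alpha)\ast(\Por'_\beta/\Por'_\gamma)$ yields $\Por'_\alpha\Vdash\Por_\beta/\Por_\alpha\lessdot_{M^{\Por_\alpha}}\Por'_\beta/\Por'_\alpha$, and a final application of Lemma \ref{2stepitemb} to $\Por_\alpha\lessdot_M\Por'_\alpha$ makes $\langle\Por_\alpha,\Por_\beta,\Por'_\alpha,\Por'_\beta\rangle$ correct with respect to $M$ (this is the same bookkeeping that underlies Corollary \ref{CorrQuotEmb}). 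Alternatively, one may observe that this pairwise correctness is a standard background fact about finite support iterations, established once and for all by the same two-step/direct-limit recursion via Lemmas \ref{2stepitemb} and \ref{dirlimEmb}, and keep the displayed induction focused purely on the $(\star)$-clause.
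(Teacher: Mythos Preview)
Your proof is correct and follows the approach the paper intends: the corollary is stated without proof precisely because it is the obvious induction on $\beta\leq\delta$ using Lemma~\ref{PresUnbreal2step} at successors and Theorem~\ref{PresUnbrealLimit} at limits. Your care in carrying the correctness clause through the induction (so that hypothesis (ii) of Theorem~\ref{PresUnbrealLimit} is available at limit stages) is exactly what is needed, and your handling of the successor case via the quotient decomposition is fine---though, as you note at the end, one may equally well establish pairwise correctness for parallel fsi once and for all by the same recursion on Lemmas~\ref{2stepitemb} and~\ref{dirlimEmb} and then run the induction only on $(\star)$.
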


\section{Forcing with ultrapowers}\label{SecUltrapow}

We present some facts, introduced by Shelah \cite{Sh-TempIt} (see also \cite{Br-TempIt}) about forcing with the ultrapower of a ccc poset by a measurable cardinal.

Recall that a cardinal $\kappa$ is \emph{measurable} if it is uncountable and has a $\kappa$-complete (non-trivial) ultrafilter $\Uwf$, where \emph{$\kappa$-complete} means that $\Uwf$ is closed under intersections of $<\kappa$ many sets. Note that, in this case, $\kappa$ is an inaccessible cardinal. For a formula $\varphi(x)$ in the language of $\thzfc$, say that \emph{$\varphi(\alpha)$ holds for $\Dwf$-many $\alpha$} iff $\{\alpha<\kappa\ /\ \varphi(\alpha)\}\in\Dwf$. To fix a notation about \emph{ultraproducts} and \emph{ultrapowers}, if $\langle X_\alpha\rangle_{\alpha<\kappa}$ is a sequence of sets, $(\prod_{\alpha<\kappa}X_\alpha)/\Dwf=[\{X_\alpha\}_{\alpha<\kappa}]$ denotes the quotient of $\prod_{\alpha<\kappa}X_\alpha$ modulo the equivalence relation given by $x\sim_\Dwf y$ iff $x_\alpha=y_\alpha$ for $\Dwf$-many $\alpha<\kappa$. If $x=\langle x_\alpha\rangle_{\alpha<\kappa}\in\prod_{\alpha<\kappa}X_\alpha$, denote its equivalence class under $\sim_\Dwf$ by $\bar{x}=\langle x_\alpha\rangle_{\alpha<\omega}/\Dwf$. It is known that posets of size $<\kappa$ does not destroy the measurability of $\kappa$, that is, preserves the $\kappa$-completeness of $\Dwf$. For facts about measurable cardinals (and large cardinals in general), see \cite{Kanamori}.

Fix a poset $\Por$, a measurable cardinal $\kappa$ and a $\kappa$-complete ultrafilter $\Dwf$ on $\kappa$. For notation, if $p\in\Por^\kappa$, denote $p_\alpha=p(\alpha)$. For $p,q\in\Por^\kappa$ say that $p\leq_\Dwf q$ iff $p_\alpha\leq q_\alpha$ for $\Dwf$-many $\alpha$. The poset $\Por^\kappa/\Dwf$, ordered by $\bar{p}\leq\bar{q}$ iff $p\leq_\Dwf q$, is the \emph{$\Dwf$-ultrapower of $\Por$}.

\begin{lemma}[Shelah {\cite{Sh-TempIt}}, see also {\cite[Lemma 0.1]{Br-TempIt}}]\label{UltraprodEmb}
   Consider $i:\Por\to\Por^\kappa/\Dwf$ defined as $i(r)=\bar{r}$ where $r_\alpha=r$ for all $\alpha<\kappa$. Then, $i$ is a complete embedding iff $\Por$ is $\kappa$-cc.
\end{lemma}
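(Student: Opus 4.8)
The plan is to separate the ``soft'' parts of being a complete embedding, which hold for every $\Por$, from the single clause that actually detects the chain condition. Recall that $i$ is a complete embedding precisely when (i) it is order preserving, (ii) it preserves and reflects incompatibility, and (iii) every $\bar q\in\Por^\kappa/\Dwf$ has a reduction in the image of $i$, equivalently, $i$ maps maximal antichains of $\Por$ to maximal antichains of $\Por^\kappa/\Dwf$. I would first dispatch (i) and (ii). Order preservation is immediate from the definition of $\leq_\Dwf$ applied to constant sequences. For (ii): if $r,s\in\Por$ and $\bar t\leq\bar r,\bar s$, then $\{\alpha\ /\ t_\alpha\leq r\}$ and $\{\alpha\ /\ t_\alpha\leq s\}$ both lie in $\Dwf$, so their intersection is nonempty (as $\Dwf$ is a filter) and any $\alpha$ in it gives $t_\alpha\leq r,s$; conversely a common extension of $r,s$ in $\Por$ yields one for $\bar r,\bar s$ via constant sequences. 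Hence (i) and (ii) hold for every $\Por$, using only that $\Dwf$ is a nonprincipal filter, and the whole content of the lemma is that (iii) is equivalent to $\kappa$-cc.

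For the direction $\kappa$-cc $\Rightarrow$ complete, I would verify (iii) through maximal antichains. Let $A\subseteq\Por$ be a maximal antichain; by $\kappa$-cc, $|A|<\kappa$. Given $\bar q$, for each $\alpha<\kappa$ choose by maximality of $A$ some $a(\alpha)\in A$ compatible with $q_\alpha$. This defines a map $\kappa\to A$ with $|A|<\kappa$, so by $\kappa$-completeness of $\Dwf$ it is constant, say with value $a\in A$, on a set $B\in\Dwf$. For $\alpha\in B$ pick $t_\alpha\leq q_\alpha,a$ and set $t_\alpha=q_\alpha$ otherwise; then $\bar t\leq\bar q$ and $\bar t\leq\bar a$, so $\bar q$ is compatible with $\bar a\in i[A]$. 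Thus $i[A]$ is a maximal antichain, giving (iii). This is the step where $\kappa$-completeness is essential: it is exactly the pigeonhole that collapses a $\kappa$-indexed choice over a set of size $<\kappa$ to a single $\Dwf$-large fibre.

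For the converse I argue contrapositively: if $\Por$ is not $\kappa$-cc, fix an antichain $\{a_\alpha\ /\ \alpha<\kappa\}$ (injectively enumerated) and consider the diagonal condition $\bar p$ with $p_\alpha=a_\alpha$. The key observation is that whenever $s\leq a_\alpha$ for a single index $\alpha$, then $\bar s\perp\bar p$: a common extension $\bar u$ would force $u_\gamma\leq a_\alpha$ and $u_\gamma\leq a_\gamma$ on a set in $\Dwf$, hence $a_\alpha$ compatible with $a_\gamma$, i.e. $\gamma=\alpha$, so that set is contained in $\{\alpha\}$ and cannot lie in $\Dwf$ since $\Dwf$ is nonprincipal. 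I then show $\bar p$ has no reduction: if $r$ were one, taking $r'=r$ shows $\bar r$ is compatible with $\bar p$, which as above forces $r$ to be compatible with $a_{\beta_0}$ for some $\beta_0$; choosing $s\leq r,a_{\beta_0}$ gives $s\leq r$ with $\bar s\perp\bar p$ by the key observation, contradicting that $r$ is a reduction. Hence (iii) fails and $i$ is not complete.

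The main obstacle is the converse, and specifically this last maneuver: a reduction $r$ of $\bar p$ need not lie below any single $a_\alpha$, so the key observation does not apply to $r$ itself. The point is to use the reduction property twice --- first on $r$ to locate an index $\beta_0$ with $r$ compatible with $a_{\beta_0}$, and then on the extension $s\leq r$ that drops below the single condition $a_{\beta_0}$, where incompatibility with $\bar p$ becomes visible. Getting this two-step use of the definition right, and checking that all relevant index sets genuinely lie in $\Dwf$ (the filter property for intersections, nonprincipality to exclude singletons), is where the real work sits; by contrast the forward direction is a clean application of $\kappa$-completeness.
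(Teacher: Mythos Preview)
Your argument is correct and is the standard proof of this fact. Note, however, that the paper does not actually supply a proof of this lemma: it is stated with attribution to Shelah \cite{Sh-TempIt} and a pointer to \cite[Lemma 0.1]{Br-TempIt}, but no \texttt{proof} environment follows. So there is nothing in the paper to compare your argument against beyond confirming that your proof matches the one found in the cited references, which it does: the forward direction is exactly the $\kappa$-completeness pigeonhole on a small maximal antichain, and the converse is the diagonal-over-a-$\kappa$-antichain trick you describe, using nonprincipality (indeed, that sets of size $<\kappa$ lie outside $\Dwf$) to see that the diagonal condition admits no reduction.

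One minor remark on presentation: in the converse you could streamline by noting directly that $\kappa$-completeness plus nonprincipality force every set of size $<\kappa$ out of $\Dwf$, so in your ``key observation'' the relevant set being a singleton is a special case of this; but what you wrote is already enough.
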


\begin{lemma}[Shelah {\cite{Sh-TempIt}}, see also {\cite[Lemma 0.2]{Br-TempIt}}]\label{Ultraprodccc}
   If $\mu<\kappa$ and $\Por$ is $\mu$-cc, then $\Por^\kappa/\Dwf$ is also $\mu$-cc.
\end{lemma}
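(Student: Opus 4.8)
The plan is to argue by contradiction, reducing the chain condition in the ultrapower to the one in $\Por$ through a pointwise analysis of (in)compatibility. The observation I would isolate first is a characterization of compatibility in $\Por^\kappa/\Dwf$: two classes $\bar p,\bar q$ are compatible in $\Por^\kappa/\Dwf$ if and only if $\{\alpha<\kappa\ /\ p_\alpha\textrm{ and }q_\alpha\textrm{ are compatible in }\Por\}\in\Dwf$. Granting this, and using that $\Dwf$ is an \emph{ultra}filter, one obtains the dual statement that $\bar p\perp\bar q$ in $\Por^\kappa/\Dwf$ if and only if the set $A_{p,q}:=\{\alpha<\kappa\ /\ p_\alpha\perp q_\alpha\}$ belongs to $\Dwf$. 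This is the bridge that transports incompatibility from the ultrapower down to a $\Dwf$-large set of coordinates.

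To prove the characterization I would check both directions directly. For the easy direction, if $B:=\{\alpha\ /\ p_\alpha\textrm{ compatible with }q_\alpha\}\in\Dwf$, choose for each $\alpha\in B$ some $r_\alpha\leq p_\alpha,q_\alpha$ (and set $r_\alpha$ arbitrarily off $B$); then $\bar r\leq\bar p,\bar q$ witnesses compatibility. Conversely, if $\bar r\leq\bar p$ and $\bar r\leq\bar q$, then $\{\alpha\ /\ r_\alpha\leq p_\alpha\}$ and $\{\alpha\ /\ r_\alpha\leq q_\alpha\}$ are both in $\Dwf$, and their intersection (a finite intersection, so only the filter property is needed here) is a $\Dwf$-set on which $p_\alpha,q_\alpha$ share the lower bound $r_\alpha$; hence $B\in\Dwf$. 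Passing to the negation, incompatibility of $\bar p,\bar q$ says $B\notin\Dwf$, which by maximality of the ultrafilter means $\kappa\menos B=A_{p,q}\in\Dwf$.

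With this in hand the main step is short. Suppose toward a contradiction that $\{\bar p^\xi\ /\ \xi<\mu\}$ is an antichain of size $\mu$ in $\Por^\kappa/\Dwf$. For each pair $\xi<\eta<\mu$ we have $A_{p^\xi,p^\eta}\in\Dwf$. The number of such pairs is at most $\mu$ (finite if $\mu$ is finite, and $\mu\cdot\mu=\mu$ if $\mu$ is infinite), and $\mu<\kappa$; so by $\kappa$-completeness of $\Dwf$ the intersection $A:=\bigcap_{\xi<\eta<\mu}A_{p^\xi,p^\eta}$ again belongs to $\Dwf$, in particular $A\neq\varnothing$. Fixing any $\alpha\in A$, the family $\{p^\xi_\alpha\ /\ \xi<\mu\}$ is pairwise incompatible in $\Por$; since incompatible conditions are in particular distinct, this is an antichain of size $\mu$ in $\Por$, contradicting that $\Por$ is $\mu$-cc.

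The only genuinely delicate point is the counting in the final step: one must know that the collection of pairs has size $<\kappa$, so that $\kappa$-completeness of $\Dwf$ (which closes $\Dwf$ only under intersections of fewer than $\kappa$ sets) actually applies. This is exactly where the hypothesis $\mu<\kappa$ enters, together with $\mu\cdot\mu=\mu$ for infinite $\mu$. The compatibility characterization, while routine, is also where the two defining features of $\Dwf$ appear separately: the filter property suffices for the compatibility equivalence, whereas the ultrafilter property is what converts ``not compatible on a $\Dwf$-set'' into ``incompatible on a $\Dwf$-set''.
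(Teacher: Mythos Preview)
Your argument is correct and is precisely the standard proof of this fact. The paper itself does not supply a proof of this lemma, only citing \cite{Sh-TempIt} and \cite[Lemma 0.2]{Br-TempIt}; the argument found in those references is essentially the one you wrote: characterize (in)compatibility in $\Por^\kappa/\Dwf$ coordinatewise via the ultrafilter, then use $\kappa$-completeness to intersect the $<\kappa$ many incompatibility sets indexed by pairs from a putative antichain of size $\mu$, obtaining a single coordinate where the projected conditions form an antichain of size $\mu$ in $\Por$.
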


Fix a ccc poset $\Por$. We analyze how $\Por^\kappa/\Dwf$-names for reals looks like in terms of $\Por$-names of reals. For reference, consider $\omega^\omega$. First we show how to construct a $\Por^\kappa/\Dwf$-name from a sequence $\langle\dot{f}_\alpha\rangle_{\alpha<\kappa}$ of $\Por$-names of reals. For each $\alpha<\omega$ and $n<\omega$, let $\{p_\alpha^{n,j}\ /\ j<\omega\}$ be a maximal antichain in $\Por$ and $k_\alpha^n:\omega\to\omega$ a function such that\ \! $p_\alpha^{n,j}\Vdash\dot{f}_\alpha(n)=k_\alpha^n(j)$ for all $j<\omega$. Put $p^{n,j}=\langle p^{n,j}_\alpha\rangle_{\alpha<\kappa}$ and note that, for $n<\omega$, $\{\bar{p}^{n,j}\ /\ j<\omega\}$ is a maximal antichain in $\Por^\kappa/\Dwf$ by $\omega_1$-completeness of $\Dwf$. Also, as $\cfrak<\kappa$, there exist a $D\in\Dwf$ and, for each $n<\omega$, a function $k^n:\omega\to\omega$ such that $k^n_\alpha=k^n$ for all $\alpha\in D$. Define $\dot{f}=\langle\dot{f}_\alpha\rangle_{\alpha<\kappa}/\Dwf$ the $\Por^\kappa/\Dwf$-name for a real such that, for any $n,j<\omega$, $\bar{p}^{n,j}\Vdash\dot{f}(n)=k^n(j)$. Note that, if $\langle \dot{g}_\alpha\rangle_{\alpha<\kappa}$ is a sequence of $\Por$-names of reals and $\Vdash_\Por\dot{f}_\alpha=\dot{g}_\alpha$ for $\Dwf$-many $\alpha$, then $\Vdash_{\Por^\kappa/\Dwf}\dot{f}=\dot{g}$ where $\dot{g}=\langle\dot{g}_\alpha\rangle_{\alpha<\kappa}/\Dwf$.

We show that any $\Por^\kappa/\Dwf$-name $\dot{f}$ for a real can be described in this way. For each $n<\omega$, let $A^n:=\{\bar{p}^{n,j}\ /\ j<\omega\}$ be a maximal antichain in $\Por^\kappa/\Dwf$ and $k^n:\omega\to\omega$ such that $\bar{p}^{n,j}\Vdash\dot{f}(n)=k^n(j)$. By $\kappa$-completeness of $\Dwf$, we can find $D\in\Dwf$ such that, for all $\alpha\in D$, $\{p^{n,j}_\alpha\ /\ j<\omega\}$ is a maximal antichain in $\Por$ for any $n<\omega$. Let $\dot{f}_\alpha$ be the $\Por$-name of a real such that $p^{n,j}_\alpha\Vdash_\Por\dot{f}_\alpha=k^n(j)$. For $\alpha\in\kappa\menos D$ just choose any $\Por$-name $\dot{f}_\alpha$ for a real, so we get that $\Vdash_{\Por^\kappa/\Dwf}\dot{f}=\langle\dot{f}_\alpha\rangle_{\alpha<\kappa}/\Dwf$.

\begin{theorem}\label{ProjStatandUltrapow}
   Fix $m<\omega$ and a $\boldsymbol{\Sigma}^1_m$ property $\varphi(x)$ of reals. Let $\langle\dot{f}_{\alpha}\rangle_{\alpha<\kappa}$ be a sequence of $\Por$-names of reals and put $\dot{f}=\langle\dot{f}_{\alpha}\rangle_{\alpha<\kappa}/\Dwf$. Then, for $\bar{p}\in\Por^\kappa/\Dwf$, $\bar{p}\Vdash\varphi(\dot{f})$ iff $p_\alpha\Vdash_\Por\varphi(\dot{f}_{\alpha})$ for $\Dwf$-many $\alpha$.
\end{theorem}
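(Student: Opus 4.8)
The plan is to prove the equivalence by induction on the logical complexity of $\varphi$, working with formulas having finitely many free real variables, each interpreted by an ultrapower name $\langle\dot{f}^{(l)}_\alpha\rangle_{\alpha<\kappa}/\Dwf$; fixing an absolute pairing on $\omega^\omega$ and noting that the ultrapower commutes with it, I may assume a single free variable. Throughout I will use two elementary ``{\L}o\'s-type'' facts, both immediate from $\Dwf$ being a $\kappa$-complete nonprincipal ultrafilter: first, $\bar{x}$ and $\bar{y}$ are compatible in $\Por^\kappa/\Dwf$ iff $x_\alpha\parallel y_\alpha$ for $\Dwf$-many $\alpha$ (one direction restricts a common extension, the other glues witnesses using $\mathrm{AC}$); and second, from any $\langle q_\alpha\rangle_{\alpha<\kappa}$ with $q_\alpha\leq p_\alpha$ for $\Dwf$-many $\alpha$ one gets $\bar{q}\leq\bar{p}$. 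I will also repeatedly use $\kappa$-completeness to conclude that a $\Dwf$-set cut into $\leq\omega<\kappa$ pieces has a piece in $\Dwf$.

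For the base case ($\varphi$ arithmetic) I induct on connectives and number-quantifiers. For an atomic value-statement ``$\dot{f}(n)=k$'' I use the maximal antichain $\{\bar{p}^{n,j}\}_{j<\omega}$ and the set $D\in\Dwf$ from the construction of $\dot f$ preceding the theorem, along which $\{p^{n,j}_\alpha\}_j$ is a maximal antichain deciding $\dot{f}_\alpha(n)=k^n(j)$: if $\bar{p}\Vdash\dot{f}(n)=k$ but $E:=\{\alpha:p_\alpha\Vdash\dot{f}_\alpha(n)=k\}\notin\Dwf$, then on the $\Dwf$-set $D\menos E$ each $p_\alpha$ is compatible with some $p^{n,j}_\alpha$ having $k^n(j)\neq k$; by $\kappa$-completeness one value $j^\ast$ works on a $\Dwf$-set, so $\bar{p}$ is compatible with $\bar{p}^{n,j^\ast}$, which forces a wrong value, a contradiction; the converse follows from this forward direction and the ultrafilter property. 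Finite Boolean combinations are routine (intersecting two $\Dwf$-sets), and for ``$\exists n\,\theta$'' I argue $(\Leftarrow)$ by density, refining a given $\bar{q}\leq\bar{p}$ componentwise to witnesses $q'_\alpha\Vdash\theta(\dot{f}_\alpha,n_\alpha)$ and using $\kappa$-completeness to fix one $n^\ast$, and $(\Rightarrow)$ by contradiction, building $\bar{s}\leq\bar{p}$ with $s_\alpha\Vdash\forall n\,\neg\theta(\dot{f}_\alpha,n)$ and applying the induction hypothesis at each fixed $n^\ast$ together with the fact that forcing commutes with $\forall n\in\omega$. I then record that the class of formulas satisfying the equivalence is closed under negation: for such $\varphi$, the failure of $\bar{p}\Vdash\neg\varphi(\dot{f})$ is equivalent, via the hypothesis for $\varphi$ and the two {\L}o\'s-type facts, to $\{\alpha:p_\alpha\not\Vdash\neg\varphi(\dot{f}_\alpha)\}\in\Dwf$, and the ultrafilter property converts this into the equivalence for $\neg\varphi$. (In fact this already yields every Borel $\varphi$, handling countable conjunctions exactly like finite ones by intersecting $\omega$ many $\Dwf$-sets.)

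For the projective step it suffices, by negation closure and $\boldsymbol{\Pi}^1_{m+1}=\neg\boldsymbol{\Sigma}^1_{m+1}$, to pass a real existential quantifier: assuming the equivalence for $\psi(x,y)$, I prove it for $\varphi(x)=\exists y\,\psi(x,y)$. For $(\Rightarrow)$, if $\bar{p}\Vdash\exists y\,\psi(\dot{f},y)$, the Maximal Principle gives a $\Por^\kappa/\Dwf$-name $\dot{y}$ with $\bar{p}\Vdash\psi(\dot{f},\dot{y})$; by the representation of $\Por^\kappa/\Dwf$-names for reals established above I may take $\dot{y}=\langle\dot{y}_\alpha\rangle_{\alpha<\kappa}/\Dwf$, and the induction hypothesis (applied to the paired name) yields $p_\alpha\Vdash\psi(\dot{f}_\alpha,\dot{y}_\alpha)$, hence $p_\alpha\Vdash\exists y\,\psi(\dot{f}_\alpha,y)$, for $\Dwf$-many $\alpha$. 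For $(\Leftarrow)$, assuming $p_\alpha\Vdash\exists y\,\psi(\dot{f}_\alpha,y)$ on a $\Dwf$-set, I show the conditions forcing $\exists y\,\psi(\dot{f},y)$ are dense below $\bar{p}$: given $\bar{q}\leq\bar{p}$, refine componentwise to $q'_\alpha\leq q_\alpha$ with witnesses $\dot{y}_\alpha$ so that $q'_\alpha\Vdash\psi(\dot{f}_\alpha,\dot{y}_\alpha)$, set $\bar{q}'=\langle q'_\alpha\rangle/\Dwf$ and $\dot{y}=\langle\dot{y}_\alpha\rangle/\Dwf$, and invoke the induction hypothesis to get $\bar{q}'\Vdash\psi(\dot{f},\dot{y})$, so $\bar{q}'\Vdash\exists y\,\psi(\dot{f},y)$.

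I expect the \textbf{main obstacle} to be exactly this real-quantifier step, for two reasons. First, forcing a genuinely projective statement is \emph{not} a first-order property of the ultrapowered poset, so plain {\L}o\'s cannot be quoted and the induction on complexity is unavoidable. Second, both directions hinge on moving between a single witness name for $\Por^\kappa/\Dwf$ and a $\kappa$-sequence of component witnesses $\langle\dot{y}_\alpha\rangle_{\alpha<\kappa}$, which is precisely where the name-representation result and the $\kappa$-completeness of $\Dwf$ (used both to synthesize and to average these witnesses) carry the weight; the atomic base-case argument, where $\kappa$-completeness collapses the $\leq\omega$ many possible wrong values, is the other point at which measurability is essential.
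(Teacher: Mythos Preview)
Your proof is correct and follows essentially the same inductive strategy as the paper: induct on projective complexity, handle the base case via the antichain/value description of the names $\dot f_\alpha$ and $\dot f$, and handle the real-quantifier step via the representation of $\Por^\kappa/\Dwf$-names for reals as $\langle\dot y_\alpha\rangle_{\alpha<\kappa}/\Dwf$.

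The only differences are cosmetic. First, your base case is more granular: you start from atomic value statements $\dot f(n)=k$ and build up through Boolean connectives and number quantifiers, whereas the paper takes $\boldsymbol{\Sigma}^1_0=\boldsymbol{\Pi}^1_0$ to be the closed sets and argues directly with a tree $T$ and finite approximations $\dot f\frestr k$. Second, you prove negation closure explicitly; the paper merely remarks in passing that validity for $\boldsymbol{\Sigma}^1_m$ implies validity for $\boldsymbol{\Pi}^1_m$. Third, in the $(\Leftarrow)$ direction of the real-quantifier step you run a density argument below $\bar p$, but this is unnecessary: since $p_\alpha\Vdash\exists y\,\psi(\dot f_\alpha,y)$ already, the Maximal Principle gives $\dot y_\alpha$ with $p_\alpha\Vdash\psi(\dot f_\alpha,\dot y_\alpha)$ directly, and one invokes the induction hypothesis at $\bar p$ itself (this is what the paper does). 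None of these differences affects correctness.
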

\begin{proof}
   This is proved by induction on $m<\omega$. Recall that $\boldsymbol{\Sigma}^1_0=\boldsymbol{\Pi}^1_0$ corresponds to the pointclass of closed sets. Thus, if $\varphi(x)$ is a $\boldsymbol{\Sigma}^1_0$-property of reals, there exists a tree $T\subseteq\omega^\omega$ such that, for $x\in\omega^\omega$, $\varphi(x)$ iff $x\in[T]:=\{z\in\omega^\omega\ /\ \forall_{k<\omega}(z\frestr k\in T)\}$.

   As in the previous discussion choose, for each $n<\omega$, a maximal antichain $\{\bar{p}^{n,j}\ /\ j<\omega\}$ on $\Por^\kappa/\Dor$ and a function $k^n:\omega\to\omega$ such that $\bar{p}^{n,j}\Vdash\dot{f}(n)=k^n(j)$ and $p^{n,j}_\alpha\Vdash\dot{f}_\alpha(n)=k^n(j)$ for $\Dwf$-many $\alpha$. First, assume that $p_\alpha\Vdash f_\alpha\in[T]$ for $\Dwf$-many $\alpha$ and fix $k<\omega$. If $\bar{q}\leq\bar{p}$, we can find a decreasing sequence $\{\bar{q}^i\}_{i\leq k}$ and a $t\in\omega^k$ such that $\bar{q}^0=\bar{q}$ and $\bar{q}^{i+1}\leq\bar{p}^{i,t(i)}$ for any $i<k$. Therefore, $\bar{q}^k\Vdash\dot{f}\frestr k=k^n\circ t$ and, for $\Dwf$-many $\alpha$, $q^k_\alpha\Vdash\dot{f}_\alpha\frestr k=k^n\circ t$, so $k^n\circ t\in T$.

   Now, assume that $p_\alpha\not\Vdash f_\alpha\in[T]$ for $\Dwf$-many $\alpha$. Without loss of generality, we may assume that there is a $k<\omega$ such that $p_\alpha\Vdash\dot{f}_\alpha\frestr k\notin T$ for $\Dwf$-many $\alpha$. To prove $\bar{p}\Vdash\dot{f}\frestr k\notin T$ repeat the same argument as before, but note that this time we get $k^n\circ t\notin T$.

   For the inductive step, assume that $\varphi(x)$ is $\boldsymbol{\Sigma}^1_{m+1}$, so $\varphi(x)\sii\exists_{y\in\omega^\omega}\psi(x,y)$ where $\psi(x,y)$ is $\boldsymbol{\Pi}^1_m(\omega^\omega\times\omega^\omega)$ (notice that, if this theorem is valid for all $\boldsymbol{\Sigma}^1_m$-statements, then it is also valid for $\boldsymbol{\Pi}^1_m$). First assume that $p_\alpha\Vdash\exists_{z\in\omega^\omega}\psi(\dot{f}_\alpha,z)$ for $\Dwf$-many $\alpha$ and, for those $\alpha$, choose a $\Por$-name $\dot{g}_\alpha$ such that $p_\alpha\Vdash\psi(\dot{f}_\alpha,\dot{g}_\alpha)$. By induction hypothesis, $\bar{p}\Vdash\psi(\dot{f},\dot{g})$ where $\dot{g}=\langle\dot{g}_\alpha\rangle_{\alpha<\kappa}/\Dwf$. The converse is also easy.
\end{proof}

\begin{corollary}[Shelah {\cite{Sh-TempIt}}, see also {\cite[Lemma 0.3]{Br-TempIt}}]\label{UltrapowDestrMAD}
   Let $\dot{\Awf}$ be a $\Por$-name of an almost disjoint (a.d.) family such that $\Vdash_\Por|\Awf|\geq\kappa$. Then, $\Vdash_{\Por^\kappa/\Dwf}\Awf\textrm{ is not maximal}$.
\end{corollary}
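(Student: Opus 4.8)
The plan is to realize $\Por$ as a complete suborder of $\Por^\kappa/\Dwf$ via the diagonal embedding $i$ of Lemma \ref{UltraprodEmb} (applicable since $\Por$ is ccc, hence $\kappa$-cc), and then to produce, inside the ultrapower extension, a single infinite set almost disjoint from every member of $\Awf$. Since $\Vdash_\Por|\Awf|\geq\kappa$ and $\kappa$ remains a cardinal (ccc preserves cardinals), I fix a $\Por$-name $\dot e$ for an injection of $\kappa$ into $\Awf$ and set $\dot A_\alpha:=\dot e(\alpha)$, so that $\langle\dot A_\alpha\rangle_{\alpha<\kappa}$ is a sequence of $\Por$-names for pairwise distinct members of $\Awf$; in particular $\Vdash_\Por\dot A_\alpha\neq\dot A_{\alpha'}$ for $\alpha\neq\alpha'$, and each $\dot A_\alpha$ is forced infinite. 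Let $\dot A=\langle\dot A_\alpha\rangle_{\alpha<\kappa}/\Dwf$ be the associated $\Por^\kappa/\Dwf$-name for a real built as in the discussion preceding Theorem \ref{ProjStatandUltrapow}. I will show that $\dot A$ is forced to be infinite and almost disjoint from every member of $\Awf$; since no infinite set is almost disjoint from itself, this forces $\dot A\notin\Awf$, so $\Awf\cup\{\dot A\}$ is a strictly larger a.d.\ family and $\Awf$ is not maximal.

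Infiniteness is immediate from Theorem \ref{ProjStatandUltrapow}: ``$x$ is infinite'' is a $\boldsymbol{\Pi}^0_2$ (hence $\boldsymbol{\Sigma}^1_1$) property, and each $\dot A_\alpha$ is forced infinite, so $p_\alpha\Vdash$``$\dot A_\alpha$ is infinite'' for every $\alpha$; thus for any $\bar p$ the relevant index set is all of $\kappa\in\Dwf$, whence $\bar p\Vdash$``$\dot A$ is infinite''.

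For almost disjointness, fix a $\Por$-name $\dot B$ with $\Vdash_\Por\dot B\in\Awf$. Coding the pair into a single real, ``$x$ and $y$ have finite intersection'' is a $\boldsymbol{\Sigma}^0_2$ (hence $\boldsymbol{\Sigma}^1_1$) property, and the ultrapower of $\langle\dot A_\alpha\oplus\dot B\rangle_{\alpha<\kappa}$ is the join $\dot A\oplus\dot B$, the second coordinate being the image of $\dot B$ under $i$ (the constant sequence $\langle\dot B\rangle_{\alpha<\kappa}$ yields, through the construction preceding Theorem \ref{ProjStatandUltrapow}, exactly the name interpreting $\dot B$ over the $\Por$-generic induced by $i$). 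By Theorem \ref{ProjStatandUltrapow} it therefore suffices to show that for every $\bar p\in\Por^\kappa/\Dwf$ the set $S_{\bar p}:=\{\alpha<\kappa\ /\ p_\alpha\Vdash_\Por\dot A_\alpha\cap\dot B\textrm{ finite}\}$ belongs to $\Dwf$. This is the main obstacle, and it is exactly where ccc enters. Suppose not; then the complement $T:=\kappa\menos S_{\bar p}\in\Dwf$, and for each $\alpha\in T$ I may pick $q_\alpha\leq p_\alpha$ forcing $\dot A_\alpha\cap\dot B$ infinite, hence (as $\Awf$ is forced a.d.) forcing $\dot A_\alpha=\dot B$. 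For distinct $\alpha,\alpha'\in T$ the conditions $q_\alpha,q_{\alpha'}$ must be incompatible, since a common extension would force $\dot A_\alpha=\dot B=\dot A_{\alpha'}$, contradicting $\Vdash_\Por\dot A_\alpha\neq\dot A_{\alpha'}$. Thus $\{q_\alpha\ /\ \alpha\in T\}$ is an antichain of $\Por$ of size $|T|=\kappa$ (by $\kappa$-completeness and non-triviality of $\Dwf$, every member of $\Dwf$ has size $\kappa$), contradicting that $\Por$ is ccc. Hence $S_{\bar p}\in\Dwf$, as required.

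Finally, since $\Por$ is a complete suborder of $\Por^\kappa/\Dwf$, the $\Por$-extension is contained in the $\Por^\kappa/\Dwf$-extension, so every member of $\Awf$ appearing in the latter already lies in the former and is named by some $\Por$-name $\dot B$ forced into $\Awf$; the previous paragraph then forces $\dot A$ almost disjoint from all such $\dot B$. Combined with infiniteness and $\dot A\notin\Awf$, this forces $\Awf$ to be non-maximal. The only genuinely delicate point is the antichain/ccc step; everything else is a direct application of Theorem \ref{ProjStatandUltrapow} to the absolute arithmetic properties ``infinite'' and ``finite intersection''.
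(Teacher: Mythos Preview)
Your proof is correct and follows the same overall strategy as the paper: form $\dot{A}=\langle\dot{A}_\alpha\rangle_{\alpha<\kappa}/\Dwf$ from $\kappa$-many distinct members of $\Awf$, then invoke Theorem~\ref{ProjStatandUltrapow} to transfer the arithmetic properties ``infinite'' and ``finite intersection''.

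The one point of divergence is how you verify almost disjointness from an arbitrary member of $\Awf$. The paper works below a condition $r\in\Por$ that fixes a full injective enumeration $\Awf=\{\dot{A}_\xi:\xi<\lambda\}$; then for any fixed $\xi$, one has $r\Vdash_\Por|\dot{A}_\alpha\cap\dot{A}_\xi|<\aleph_0$ for every $\alpha<\kappa$ with $\alpha\neq\xi$, and since $\{\alpha<\kappa:\alpha\neq\xi\}\in\Dwf$, Theorem~\ref{ProjStatandUltrapow} applied to the constant condition $i(r)$ gives $i(r)\Vdash|\dot{A}\cap\dot{A}_\xi|<\aleph_0$ immediately. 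Your route, using only an injection $\kappa\to\Awf$ rather than a full enumeration, must handle an arbitrary $\Por$-name $\dot{B}$ for a member of $\Awf$, and this is where your ccc antichain argument enters. That argument is sound (and your use of mixing in the final paragraph to arrange $\Vdash_\Por\dot{B}\in\Awf$ is legitimate), but it is extra work that the paper's full-enumeration trick sidesteps entirely. In short: same idea, same witness $\dot{A}$, but the paper's bookkeeping makes the ``delicate'' ccc step unnecessary.
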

\begin{proof}
   Let $r\in\Por$ and $\lambda\geq\kappa$ be a cardinal such that $r\Vdash_\Por\dot{\Awf}=\{\dot{A}_\xi\ /\ \xi<\lambda\}$. Put $\dot{A}=\langle\dot{A}_\alpha\rangle_{\alpha<\kappa}/\Dwf$ (this can be defined in a similar way by associating the characteristic function to each set), and show that it is a $\Por^\kappa/\Dwf$-name of an infinite subset of $\omega$ and $i(r)\Vdash\forall_{\xi<\lambda}(|\dot{A}_\xi\cap\dot{A}|<\aleph_0)$. But this is straightforward from Theorem \ref{ProjStatandUltrapow}.
\end{proof}

{\small

\bibliographystyle{spmpsci}      


}
\end{document}